\documentclass{article}

\usepackage{amsmath}
\usepackage{amssymb}
\usepackage{amsthm}
\usepackage{enumerate}
\usepackage{hyperref}
\usepackage{bbm}
\usepackage[left=2.5cm,top=2.8cm,right=2.5cm,bottom=2.8cm]{geometry}
\usepackage[english]{babel}
\usepackage{tikz}
\usepackage{soul}
\usepackage{xcolor}
\usepackage{booktabs}
\usepackage{tabularx}

\newtheorem{theorem}{Theorem}[section]
\newtheorem{corollary}[theorem]{Corollary}
\newtheorem{lemma}[theorem]{Lemma}
\newtheorem{proposition}[theorem]{Proposition}
\newtheorem{remark}[theorem]{Remark}
\newtheorem{example}[theorem]{Example}

\newcommand{\N}{{\ensuremath{\mathbb{N}}}}
\newcommand{\C}{{\ensuremath{\mathbb{C}}}}
\newcommand{\Cm}{{\ensuremath{\C^{m\times n}}}}
\newcommand{\Cn}{{\ensuremath{\C^{n\times m}}}}
\newcommand{\Cnn}{{\ensuremath{\C^{n\times n}}}}

\newcommand{\Ra}{{\ensuremath{\cal R}}}
\newcommand{\Nu}{{\ensuremath{\cal N}}}
\newcommand{\ind}{{\ensuremath{\rm Ind}}}
\newcommand{\ra}{{\ensuremath{\rm rk}}}
\newcommand{\wg}{{\ensuremath{\rm WG}}}

\newcommand{\ep}{{\ensuremath{\rm EP}}}
\newcommand{\gm}{{\ensuremath{\rm GM}}}
\newcommand{\wc}{{\text{\tiny \rm WC}}}
\newcommand{\rk}{{\ensuremath{\rm rk}}}

\newcommand{\odagger}{\mathrel{\text{\textcircled{$\dagger$}}}}
\newcommand{\core}{\mathrel{\text{\textcircled{$\#$}}}}
\newcommand{\weak}{\mathrel{\text{\textcircled{w}}}}

\begin{document}
\author{D.E. Ferreyra\thanks{Universidad Nacional de La Pampa, FCEyN, Uruguay 151, 6300 Santa Rosa, La Pampa,
Argentina.} $^\dag$,
F.E. Levis\thanks{Universidad Nacional de R\'io Cuarto, CONICET, FCEFQyN, RN 36 KM 601, 5800 R\'io Cuarto, C\'ordoba, Argentina. E-mail: \texttt{deferreyra@exa.unrc.edu.ar, flevis@exa.unrc.edu.ar, apriori@exa.unrc.edu.ar.}}, A.N. Priori$^\dag$, 
N. Thome\thanks{Instituto Universitario de
Matem\'atica Multidisciplinar, Universitat Polit\`ecnica de
Val\`encia, 46022 Valencia, Spain. E-mail: \texttt{njthome@mat.upv.es}.}}

\title{Extending EP matrices by means of recent generalized inverses}
\date{}
\maketitle

\begin{abstract}
It is well known that a square  complex matrix is called EP if it commutes with its Moore-Penrose inverse. In this paper, new classes of matrices which extend 
this concept are characterized. For that, we consider commutative equalities given by matrices of arbitrary index and generalized inverses recently investigated in the literature. More specifically, these classes are characterized  by expressions of type $A^mX=XA^m$, where $X$ is an outer inverse of a given complex square matrix $A$ and $m$ is an arbitrary positive integer. The relationships between the different classes of matrices are also analyzed. Finally, a picture presents an overview  of the overall studied classes.
\end{abstract}

AMS Classification: 15A09, 15A27

\textrm{Keywords}: Core EP inverse; DMP inverse; WG inverse; WC inverse; EP matrix.

\section{Introduction and Notations}

A square complex matrix $A$ is said to be EP (or range-Hermitian) if its range space is equal to the range space of its conjugate transpose $A^*$ or, equivalently, if the orthogonal projector onto its range coincides with the orthogonal projector onto the range of $A^*$. EP matrices were defined in 1950 by H. Schwerdtfeger \cite{Sch} and since then, this class of matrices has attracted the attention of several authors \cite{ChTi, Dj, MoDjKo, Pearl, RaMi, Tian}. 
The EP matrix class includes as special cases the wide classes of matrices such as hermitian matrices, skew-hermitian matrices, unitary matrices,  normal matrices, orthogonal projectors and, of course, nonsingular matrices.

EP matrices have been generalized in many ways. The first two  generalizations, known as bi-dagger  and bi-EP matrices, were given by Hartwig et al. in \cite{HaSp}. Several generalizations include them, such as conjugate EP matrices \cite{MeIn}, $K$-EP matrices \cite{MeKr}, weighted EP matrices \cite{Tian}, relative EP matrices \cite{FeMa}, etc.
All these classes of matrices can be expressed in terms of the Moore-Penrose inverse. 
Others generalized inverses,  such as DMP \cite{MaTh} inverses, core EP inverses \cite{PrMo}, CMP inverses \cite{MeSa} and WG inverses \cite{WaCh}, were introduced and thoroughly studied in the last years.

By using these inverses in equalities of type $A^kX=XA^k$, where $k$ is the index of $A$, in \cite{FeLeTh3}, generalizations of EP matrices gave rise to new classes of matrices, such as $k$-index EP (originally called index-EP in \cite{WaLi}), $k$-EP matrices \cite{MaRuTh}, $k$-CMP matrices, $k$-DMP matrices, dual $k$-DMP matrices, and $k$-core EP matrices.
 
In \cite{Pearl}, Pearl proved that  a square  complex matrix is EP if and only if it commutes with its Moore-Penrose inverse.
Recently, related to this result, in \cite{WaLi} the authors presented the weak group matrix 
as a square complex  matrix $A$ that commute with its WG inverse as we will recall later. This new class of matrices is wider 
than that of the well-known group matrices, given by those matrices of index at most one, to which EP matrices belong.  

Motivated  by these papers, our purpose is to give new characterizations of the aforementioned  classes of matrices in terms of only equalities of type $AX=XA$, for $X$ being an outer generalized inverse. 
Consequently, we will provide an easier condition for that to be checked.    
Since the power of $A$ will turn out not to be relevant, the main advantage is now that shall be not necessary to compute explicitly the index. 

The symbol $\Cm$ stands for the set of all $m\times n$ complex matrices. For any matrix $A\in \Cm$, we denote its conjugate transpose,
 inverse (whenever it exists),  rank,  null space, and  range space of $A$ by  $A^{\ast},$ $A^{-1}$, $\rk(A)$, $\Nu(A)$, and  $\Ra(A)$, respectively.  Moreover, $I_n$ will refer to the $n \times n$ identity matrix. The index of $A\in \Cnn$, denoted by $\ind(A)$, is the smallest nonnegative integer $k$ such that $\rk(A^k) = \rk(A^{k+1})$. Throughout this paper, we will assume that $\ind(A)=k\ge 1$.
 
We start recalling the definitions of some generalized
inverses.  For $A\in\Cm$, the Moore-Penrose inverse $A^\dag$ of $A$ is the unique matrix $X \in \Cn$ satisfying the following four equations: $AXA=A$, $XAX=X$, $(AX)^*=AX$, and $(XA)^*=XA$. The Moore-Penrose inverse can be used to represent the orthogonal projectors $P_A:=AA^{\dag}$ and $Q_A:=A^{\dag}A$ onto $\Ra(A)$ and $\Ra(A^*)$, respectively. 
A matrix $X\in \Cn$ satisfying $AXA=A$ is called an inner inverse of $A$, while a matrix $X\in \Cn$ satisfying $XAX=X$ is called an outer inverse of $A$. 

For $A\in \Cnn$, the Drazin inverse $A^d$ of $A$ is the unique matrix $X\in \Cnn$ satisfying $X A X=X$, $AX=XA$, and $XA^{k+1}=A^k$. If $\ind(A)\le 1$, then the Drazin inverse is called the group inverse of $A$ and denoted by $A^\#$.
It is known that $A^\#$ exists if and only if $\ra(A)=\ra(A^2)$, in which case $A$ is called a group matrix. 

The symbols $\C_n^{\gm}$ and $\C_n^{\ep}$ will denote the subsets of $\Cnn$ consisting of group matrices and EP matrices, respectively. It is well known  that $\C_n^{\ep}\subseteq \C_n^{\gm}$  \cite{BeGr}.
  
In 1972, Rao et al. \cite{RaMi2} introduced the (uniquely determined) generalized inverse $A^-_{\rho^*,\chi}=A^\#P_A$ of $A \in \Cnn$. 
In 2010, Baksalary et al. \cite{BaTr} rediscovered it and denote it by $A^{\core}$.
 
By exploiting this same idea of generating a generalized inverse, new generalized inverses of a matrix/operator/element in a ring were  recently defined by using known generalized inverses and projectors (idempotent elements).
In Table \ref{Generalized inverses}, a glossary with the main definitions and notations related to these inverses is added, where $A \in \Cnn$ with $\ind(A)=k$. 

\begin{table}[ht]
\begin{center}
\begin{tabularx}{\textwidth}{XXX}
 \toprule  \toprule
\textbf{Definition} &  \textbf{Name} & \textbf{Reference}    \\ \midrule
$A^{\odagger}=A^k (A^{k+1})^\dag$ & core EP inverse &  \cite{PrMo, ZhCh}   \\
$A^{d,\dagger}=A^dP_A$ & DMP inverse &  \cite{MaTh} \\
$A^{\dagger, d}=Q_A A^d$ &  dual DMP inverse  &  \cite{MaTh}  \\
$A^{c,\dagger}=Q_A A^{d,\dag}$ &  CMP  inverse & \cite{MeSa}   \\
$A^{\weak}=(A^{\odagger})^2 A$ &  WG inverse &  \cite{WaCh}  \\ 
$A^{\dag, \odagger}=Q_A A^{\odagger}$ &  MPCEP inverse  &  \cite{ChMoXu}   \\
$A^{\weak,\dag}=A^{\weak} P_A$ &  WC inverse &  \cite{FeLePrTh}   \\
$A^{\dagger,\weak}=Q_A A^{\weak}$ & dual WC inverse &  \cite{FeLePrTh} \\ \bottomrule
\end{tabularx}
\caption{Recent generalized inverses}
\label{Generalized inverses}
\end{center}
\end{table}

The matrix classes corresponding to some of aforementioned generalized inverses are listed in Table \ref{Matrix classes}.

\begin{table}[ht]
\begin{center}
\begin{tabularx}{\textwidth}{XXXX}
\toprule 
\toprule
\textbf{Notation} & \textbf{Definition} & \textbf{Name} & \textbf{Reference}    \\ \midrule
$\C_{n}^{k,iEP}$  & $A^k (A^k)^\dag  = (A^k)^\dag A^k$ & $k$-index EP matrix & \cite{WaLi}\\
$\C_n^{k,\dag}$  & $A^k A^\dag  = A^\dag A^k$ & $k$-EP matrix  & \cite{MaRuTh} \\
$\C_{n}^{k,d \dagger}$  &  $ A^k A^{d,\dagger}=A^{d,\dagger} A^k$ & $k$-DMP matrix & \cite{FeLeTh3} \\
$\C_{n}^{k,\dagger d}$   &  $A^k A^{\dagger,d}=A^{\dagger,d} A^k$ & dual $k$-DMP matrix & \cite{FeLeTh3} \\
$\C_{n}^{k, c \dagger}$   & $A^k A^{c, \dagger}=A^{c,\dagger} A^k$ &  $k$-CMP matrix & \cite{FeLeTh3} \\
$\C_{n}^{k, \odagger}$   & $A^k A^{\odagger}=A^{\odagger} A^k$ &  $k$-core EP matrix & \cite{FeLeTh3} \\  
$\C_{n}^{k,\wg}$  & $A A^{\weak}=A^{\weak} A$ & WG matrix & \cite{WaLi}\\
$\C_{n}^{m,k, \odagger}$, $m\in \N$  &  $A^m A^{\odagger}=A^{\odagger} A^m$ & $\{m,k\}$-core EP matrix & \cite{ZhChTh} \\ \bottomrule
\end{tabularx}
\caption{Matrix classes}
\label{Matrix classes}
\end{center}
\end{table}

In order to recall some relationships between these sets, we refer the reader to \cite[Theorem 3.20]{FeLeTh3} and \cite[Remark 2.1 and Theorem 4.6]{WaLi}.

Moreover, in the same paper, it was proved the following interesting characterization for $k$-index EP matrices
\begin{equation}\label{index EP matrix}
 A\in \C_{n}^{k,iEP} ~ \Leftrightarrow ~ A A^{\odagger}=A^{\odagger} A.
\end{equation}
Motivated  by \eqref{index EP matrix} and the sets 
$\C_{n}^{k,\wg}$ and $\C_{n}^{m,k, \odagger}$,  our purpose is to give new characterizations of the  classes of matrices given in Table \ref{Matrix classes} in terms  of  commutative type equalities $AX=XA$ (where only the first power of $A$ is involved), or even $A^mX=XA^m$ for any positive integer $m$, for $X$ being an outer inverse.

We will also consider the new classes of matrices introduced in Table \ref{matrix classes 2}.

\begin{table}[ht]
\begin{center}
\begin{tabularx}{\textwidth}{XXX}
\toprule 
\toprule
\textbf{Notation} & \textbf{Definition} & \textbf{Name}  \\ \midrule
$\C_{n}^{k, \weak}$  & $A^k A^{\weak}=A^{\weak} A^k$ & $k$-WG matrix \\
$\C_{n}^{k, \dag \odagger}$  & $A^k A^{\dag,\odagger}=A^{\dag,\odagger} A^k$ & $k$-MPCEP matrix   \\
$\C_{n}^{k,\weak \dag}$  &  $A^k A^{\weak,\dag}=A^{\weak,\dag} A^k$ & $k$-WC matrix  \\
$\C_{n}^{k,\dag \weak}$   &  $A^k A^{\dag, \weak}=A^{\dag, \weak} A^k$ & dual $k$-WC matrix \\ \bottomrule
\end{tabularx}
\caption{New matrix classes}
\label{matrix classes 2}
\end{center}
\end{table}

We would like to highlight that the powerful of the results obtained in this paper is that we do not need to check $A^kX=XA^k$ for $k$ being the index of $A$ (for the different generalized inverses), but only $A^mX=XA^m$ for any positive integer $m$ or even $AX=XA$, which offers much more flexibility in computational aspects. 

The remainder of this paper is  structured into five sections.  In Section 2, some preliminaries are given. In Section 3, we study new classes of matrices by expressions of type $A^mX=XA^m$, where $X$ is an outer inverse of a given complex square matrix $A$ and $m$ is an arbitrary positive integer. In particular, a new characterization of the Drazin inverse is obtained. In Section 4, we obtain  new characterizations  of matrix classes  defined in Table \ref{Matrix classes}  as well as we characterize the  matrix classes given in Table \ref{matrix classes 2} by using the core EP decomposition. The equivalence between classes $k$-index EP, $k$-core EP, $\{m,k\}$-core EP, and $k$-MPCEP are provided in Section 5. Finally, in order to illustrate the whole situation, a picture presents an overview  of the overall studied classes.

\section{Preliminaries}

As stated in \cite{Wang}, every matrix $A\in \Cnn$ can be written in its Core EP decomposition \begin{equation} \label{core EP decomposition}A= U\left[\begin{array}{cc}
T & S\\
0 & N
\end{array}\right]U^*,
\end{equation}
where $U\in \Cnn$ is unitary, $T\in \C^{t\times t}$ is nonsingular, $t:=\text{rk}(T)=\text{rk}(A^k)$, and $N\in \C^{(n-t)\times (n-t)}$ is nilpotent of index $k$.

Throughout, we consider the notations  $$\Delta:=(T T^*+ S(I_{n-t}-Q_N)S^*)^{-1}$$ and 
\begin{equation}\label{T tilde}
\tilde{T}_m:=\sum\limits_{i=0}^{m-1} T^{i} S N^{m-1-i}, \quad m\in \N.
\end{equation}

From \eqref{T tilde} and applying induction on $m$, it is easy to see the following property:
\begin{equation}\label{T tilde property}
\tilde{T}_{m+1}=\tilde{T}_m N+T^mS, \quad m\in \N.
\end{equation} 
Moreover, if $m \ge k$, then $$\tilde{T}_m=\sum\limits_{i=m-k}^{m-1} T^{i} S N^{m-1-i}= \sum\limits_{i=0}^{k-1} T^{m-k+i} S N^{k-1-i},$$ and so
\begin{equation}\label{T tilde property II}
\tilde{T}_m=T^{m-k} \tilde{T}_k.
\end{equation} 
The generalized inverses previously  introduced  can be represented by using the core EP decomposition \cite{WaCh,FeLeTh3,ChMoXu,FeLePrTh}. We summarize them in Table \ref{core EP representation}. 
Here, unitary similarity will be denoted by $\stackrel{*}\approx$. Suppose   $A\in \Cnn$  is of the form (\ref{core EP decomposition}) with $\text{Ind}(A)=k$. Then:

\begin{table}[ht]
\begin{center}
\begin{tabularx}{\textwidth}{l X}
\toprule 
\toprule
\multicolumn{2}{c}{\textbf{
Representation of generalized inverses by means of Core EP decomposition
}} 
\\ 
\midrule
 \vspace{5mm}
 $A^\dag \stackrel{*}\approx \left[\begin{array}{cc}
T^*\Delta & -T^*\Delta S N^\dag\\
(I_{n-t}-Q_N)S^*\Delta & N^\dag-(I_{n-t}-Q_N)S^*\Delta S N^\dag
\end{array}\right]$ & \qquad $A^{d} \stackrel{*}\approx \left[\begin{array}{cc}
T^{-1} & T^{-(k+1)}\tilde{T}_{k} \\
0 & 0
\end{array}\right]$  \\ 
\vspace{5mm}
$A^{\dagger, d} \stackrel{*}\approx \left[\begin{array}{cc}
T^* \Delta & T^* \Delta T^{-k} \widetilde{T}_k \\
(I_{n-t}-Q_N)S^*\Delta & (I_{n-t}-Q_N)S^*\Delta T^{-k}\widetilde{T}_k 
\end{array}\right]$  & \qquad  
$A^{d,\dagger} \stackrel{*}\approx \left[\begin{array}{cc}T^{-1} & T^{-(k+1)} \tilde{T}_{k} P_N \\0 & 0\end{array}\right]$   \\ 
\vspace{5mm}
$A^{c,\dagger} \stackrel{*}\approx \left[\begin{array}{cc}
T^* \Delta & T^* \Delta T^{-k} \tilde{T}_{k} P_N\\
(I_{n-t}-Q_N)S^*\Delta & (I_{n-t}-Q_N)S^*\Delta T^{-k} \tilde{T}_{k} P_N
\end{array}\right]$ & \qquad  $A^{\odagger} \stackrel{*}\approx \left[\begin{array}{cc}
T^{-1} & 0 \\
0 & 0
\end{array}\right]$ \\ 
\vspace{5mm}
$A^{\dag, \odagger} \stackrel{*}\approx \left[\begin{array}{cc}
T^* \Delta & 0 \\
(I_{n-t}-Q_N)S^* \Delta  & 0
\end{array}\right]$  &\qquad $A^{\weak} \stackrel{*}\approx \left[\begin{array}{cc}T^{-1} & T^{-2} S\\0 & 0\end{array}\right]$   \\ 
\vspace{5mm}
$A^{\dagger,\weak} \stackrel{*}\approx \left[\begin{array}{cc}
T^* \Delta  & T^* \Delta T^{-1} S  \\
(I_{n-t}-Q_N)S^*\Delta  & (I_{n-t}-Q_N)S^*\Delta T^{-1} S
\end{array}\right]$ & \qquad 
$A^{\weak,\dag} \stackrel{*}\approx \left[\begin{array}{cc}
T^{-1}  & T^{-1} S P_N\\
0 & 0
\end{array}\right] $   \\ 
\bottomrule
\end{tabularx} 
\caption{Representation of generalized inverses}
\label{core EP representation} 
\end{center}
\end{table}

From \eqref{core EP decomposition} and Table \ref{core EP representation} we derive the following expressions for the orthogonal projectors 

\begin{equation}\label{Q_A}
\begin{split}
P_A  & \stackrel{*}\approx \left[\begin{array}{cc}
I_t &  0 \\
0 & P_N
\end{array}\right] \text{ and } 
\\ Q_A & \stackrel{*}\approx \left[\begin{array}{cc}
T^* \Delta T & T^* \Delta S (I_{n-t}-Q_N)\\
(I_{n-t}-Q_N)S^*\Delta  T & Q_N+ (I_{n-t}-Q_N)S^*\Delta S (I_{n-t}-Q_N)
\end{array}\right].
\end{split}
\end{equation}

\section{Generalized inverses commuting with $A^m$}

In this section, we study classes of matrices by means of equalities of the form $A^mX=XA^m$, where $X$ is a $\{2\}$-inverse of $A$, and $m$ is an arbitrary positive integer. We investigate characterizations of this new type of matrices by applying the core-EP decomposition. In particular, we derive a new characterization of the Drazin inverse. 

We start with an auxiliary lemma. 
\begin{lemma}\label{lemma Z} Let  $A\in \Cnn$ be written as in (\ref{core EP decomposition}) with $\text{Ind}(A)=k$. Then $Z\in A\{2\}$ and $\Ra(Z)=\Ra(A^k)$ if and only if 
\begin{equation*}
Z=U\left[\begin{array}{cc}
T^{-1} & Z_2 \\
0 & 0
\end{array}\right]U^*,
\end{equation*}
where $Z_2$ is an arbitrary matrix of adequate size. 
\end{lemma}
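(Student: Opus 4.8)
The plan is to prove both implications directly from the core EP decomposition \eqref{core EP decomposition}, using the block structure $A \stackrel{*}\approx \left[\begin{smallmatrix} T & S \\ 0 & N \end{smallmatrix}\right]$ with $T$ nonsingular and $N$ nilpotent of index $k$.

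For the ``if'' direction, suppose $Z = U\left[\begin{smallmatrix} T^{-1} & Z_2 \\ 0 & 0 \end{smallmatrix}\right]U^*$. A direct block multiplication gives $AZ \stackrel{*}\approx \left[\begin{smallmatrix} I_t & T Z_2 \\ 0 & 0 \end{smallmatrix}\right]$, and then $ZAZ \stackrel{*}\approx \left[\begin{smallmatrix} T^{-1} & Z_2 \\ 0 & 0 \end{smallmatrix}\right] \left[\begin{smallmatrix} I_t & T Z_2 \\ 0 & 0 \end{smallmatrix}\right] = \left[\begin{smallmatrix} T^{-1} & Z_2 \\ 0 & 0 \end{smallmatrix}\right] \stackrel{*}\approx Z$, so $Z \in A\{2\}$. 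For the range condition, note that $A^k \stackrel{*}\approx \left[\begin{smallmatrix} T^k & \tilde{T}_k \\ 0 & 0 \end{smallmatrix}\right]$ by induction (using \eqref{T tilde}), and since $T^k$ is nonsingular, the column space of this block matrix is exactly $\left[\begin{smallmatrix} \C^t \\ 0 \end{smallmatrix}\right]$; the same is visibly true for $Z$, so $\Ra(Z) = \Ra(A^k)$ after applying the unitary $U$.

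For the ``only if'' direction, write $Z = U\left[\begin{smallmatrix} Z_1 & Z_2 \\ Z_3 & Z_4 \end{smallmatrix}\right]U^*$ in conformable blocks. The condition $\Ra(Z) = \Ra(A^k)$ forces the bottom block row $\begin{bmatrix} Z_3 & Z_4 \end{bmatrix}$ to vanish, since $\Ra(A^k) \stackrel{*}\approx \left[\begin{smallmatrix} \C^t \\ 0 \end{smallmatrix}\right]$ as computed above; equivalently, every column of $Z$ lies in that subspace. It remains to identify $Z_1 = T^{-1}$. With the bottom row zero, $AZ \stackrel{*}\approx \left[\begin{smallmatrix} T Z_1 & T Z_2 \\ 0 & 0 \end{smallmatrix}\right]$ and $ZAZ \stackrel{*}\approx \left[\begin{smallmatrix} Z_1 T Z_1 & Z_1 T Z_2 \\ 0 & 0 \end{smallmatrix}\right]$, and the outer-inverse equation $ZAZ = Z$ gives, in the $(1,1)$ block, $Z_1 T Z_1 = Z_1$. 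This alone does not pin down $Z_1$; I need one more piece of information. The rank condition is the key: $\rk(Z) = \rk(A^k) = t$ because $\Ra(Z) = \Ra(A^k)$, and since $Z$ collapses to the $t \times n$ matrix $\begin{bmatrix} Z_1 & Z_2 \end{bmatrix}$ with $t$ rows, this matrix has full row rank $t$, hence $Z_1 T Z_1 = Z_1$ together with $Z_1$ having a left inverse (as the first block of a full-row-rank matrix — more precisely, $Z_1 T$ acts as identity on the row space of $\begin{bmatrix} Z_1 & Z_2 \end{bmatrix}$, which is all of $\C^t$) forces $Z_1 T = I_t$, i.e.\ $Z_1 = T^{-1}$.

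The main obstacle is the last step: extracting $Z_1 = T^{-1}$ from $Z_1 T Z_1 = Z_1$, which requires careful use of the rank/range hypothesis rather than just the $\{2\}$-inverse equation. The cleanest way is to observe that $\Ra(Z) = \Ra(A^k) = \Ra(A Z)$ (the latter equality also needing a short argument from the block forms), so $Z$ and $AZ$ have the same rank $t$; since $AZ \stackrel{*}\approx \left[\begin{smallmatrix} T Z_1 & T Z_2 \\ 0 & 0 \end{smallmatrix}\right]$ and $ZAZ = Z$, the map $W \mapsto AZW$ restricted to $\Ra(A^k)$ is both idempotent (as $AZ$ acting on its own range) and injective on a space of dimension $t$, forcing $AZ$ to be the identity on $\Ra(A^k) \stackrel{*}\approx \left[\begin{smallmatrix}\C^t\\0\end{smallmatrix}\right]$; reading off the $(1,1)$ block yields $T Z_1 = I_t$. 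I would present this as the crux and keep the block computations terse.
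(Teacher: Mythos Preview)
Your proof is correct and self-contained, but proceeds differently from the paper's. The paper invokes \cite[Lemma~4.2]{FeLeTh3} to rewrite the hypotheses $Z\in A\{2\}$ and $\Ra(Z)=\Ra(A^k)$ as the pair $ZA^{k+1}=A^k$ and $\Ra(Z)\subseteq\Ra(A^k)$; comparing blocks in $ZA^{k+1}=A^k$ immediately yields $Z_1=T^{-1}$ and $Z_3=0$, and the inclusion gives $Z_4=0$ via $P_{A^k}Z=Z$. You instead kill the bottom block row directly from $\Ra(Z)=\Ra(A^k)$ and then recover $Z_1=T^{-1}$ from the outer-inverse equation plus the rank constraint, arguing that $AZ$ is an idempotent with range exactly $\Ra(A^k)$, hence acts as the identity there, so $TZ_1=I_t$. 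Your route avoids the external citation and is fully elementary; the paper's is shorter once that cited equivalence is granted. One minor wording issue: in your middle paragraph, ``$Z_1$ having a left inverse as the first block of a full-row-rank matrix'' is not a valid inference (consider $[\,0\;\,I\,]$), and ``row space of $[Z_1\;Z_2]$'' should read ``column space''; your final-paragraph argument via the idempotent $AZ$ is clean and correct, and supersedes this.
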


\begin{proof}
It is clear that 
\begin{equation} \label{Ak}
A^k= U\left[\begin{array}{cc}
T^k & \tilde{T}_k \\
0 & 0
\end{array}\right]U^*,
\end{equation}
where $\tilde{T}_k$ is the expression (\ref{T tilde}).
Assuming that $
Z= U\left[\begin{array}{cc}
Z_1 & Z_2 \\
Z_3 & Z_4
\end{array}\right]U^*
$ is partitioned according to the blocks of \eqref{core EP decomposition}, by \cite[Lemma 4.2]{FeLeTh3} we know that $Z\in A\{2\}$ and $\Ra(Z)=\Ra(A^k)$ is equivalent to  $Z A^{k+1}=A^k$ and $\Ra(Z)\subseteq\Ra(A^k)$. From \eqref{Ak}, it clear that $Z A^{k+1}=A^k$ implies $Z_1=T^{-1}$ and $Z_3=0$. 
Moreover, by \cite[Lemma 2.5]{FeLeTh1}, we have
\begin{equation}\label{proyector}
P_{A^k}=A^k (A^k)^\dagger=U\left[\begin{array}{cc}
I_t & 0 \\
0 & 0
\end{array}\right]U^*.
\end{equation} 
Thus, as $\Ra(Z)\subseteq\Ra(A^k)$ is equivalent to $P_{A^k}Z=Z$, from \eqref{proyector}, we obtain $Z_4=0$. 
\\ Since the converse is evident, the proof is complete.
\end{proof}

\begin{proposition}\label{thm Q_AZ 2} Let  $A\in \Cnn$ with $\text{Ind}(A)=k$. Let $X=Q_AZ$, where $Z\in A\{2\}$ and $\Ra(Z)=\Ra(A^k)$. The following conditions are equivalent:
\begin{enumerate}[(a)]
\item $X=A^d$;
\item $A^m X= X A^m$, for all $m \in \N$; 
\item $A^m X= X A^m$, for some $m \in \N$. 
\end{enumerate}
\end{proposition}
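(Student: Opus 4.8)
The plan is to prove the cycle $(a)\Rightarrow(b)\Rightarrow(c)\Rightarrow(a)$. The implication $(a)\Rightarrow(b)$ is the standard fact that the Drazin inverse commutes with every power of $A$: from $AA^{d}=A^{d}A$ one gets $A^{m}A^{d}=A^{d}A^{m}$ for all $m\in\N$ by an immediate induction, so $X=A^{d}$ gives $A^{m}X=XA^{m}$ for all $m$. The implication $(b)\Rightarrow(c)$ is trivial, so everything reduces to $(c)\Rightarrow(a)$.

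For $(c)\Rightarrow(a)$ I would work in the Core EP decomposition \eqref{core EP decomposition}. By Lemma~\ref{lemma Z} we may write $Z\stackrel{*}{\approx}\left[\begin{smallmatrix}T^{-1}&Z_{2}\\ 0&0\end{smallmatrix}\right]$ for some matrix $Z_{2}$, and multiplying on the left by the block form of $Q_{A}$ from \eqref{Q_A} gives
\[
X=Q_{A}Z\stackrel{*}{\approx}\left[\begin{array}{cc}T^{*}\Delta & T^{*}\Delta T Z_{2}\\ (I_{n-t}-Q_{N})S^{*}\Delta & (I_{n-t}-Q_{N})S^{*}\Delta T Z_{2}\end{array}\right].
\]
The key observation is that the columns of the lower block-row of $X$ lie in $\Ra(I_{n-t}-Q_{N})=\Nu(N)$, so $N$ annihilates this block-row from the left. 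Using the block representation $A^{m}\stackrel{*}{\approx}\left[\begin{smallmatrix}T^{m}&\tilde{T}_{m}\\ 0&N^{m}\end{smallmatrix}\right]$, obtained by a routine induction from \eqref{T tilde property} (and of which \eqref{Ak} is the case $m\ge k$), the $(2,1)$-block of $A^{m}X=XA^{m}$ reads $N^{m}(I_{n-t}-Q_{N})S^{*}\Delta=(I_{n-t}-Q_{N})S^{*}\Delta T^{m}$. The left-hand side vanishes, and since $T^{m}$ and $\Delta$ are invertible we conclude $S(I_{n-t}-Q_{N})=0$. Substituting this back simplifies $\Delta$ to $(TT^{*})^{-1}$ and the displayed matrix collapses to $X\stackrel{*}{\approx}\left[\begin{smallmatrix}T^{-1}&Z_{2}\\ 0&0\end{smallmatrix}\right]$.

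It remains to pin down $Z_{2}$. Comparing with $A^{d}\stackrel{*}{\approx}\left[\begin{smallmatrix}T^{-1}&T^{-(k+1)}\tilde{T}_{k}\\ 0&0\end{smallmatrix}\right]$, set $W:=Z_{2}-T^{-(k+1)}\tilde{T}_{k}$, so that $Y:=X-A^{d}\stackrel{*}{\approx}\left[\begin{smallmatrix}0&W\\ 0&0\end{smallmatrix}\right]$. Since $A^{m}$ commutes with $X$ by hypothesis and with $A^{d}$ by the first paragraph, it commutes with $Y$; the $(1,2)$-block of $A^{m}Y=YA^{m}$ is exactly $T^{m}W=WN^{m}$. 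Iterating yields $T^{jm}W=WN^{jm}$ for all $j\ge1$, and taking $j$ with $jm\ge k$ (so $N^{jm}=0$) gives $T^{jm}W=0$, hence $W=0$ because $T$ is nonsingular. Therefore $Z_{2}=T^{-(k+1)}\tilde{T}_{k}$ and $X=A^{d}$.

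I expect the only genuine difficulty to be this last step when the available exponent $m$ is smaller than the index $k$, for then $N^{m}\neq0$ and $Z_{2}$ cannot be recovered from a single block equation. The way around it is the iteration $T^{m}W=WN^{m}\Rightarrow T^{jm}W=WN^{jm}$ together with nilpotency of $N$ — equivalently, the observation that the Sylvester-type equation $T^{m}W-WN^{m}=0$ has only the trivial solution because $T^{m}$ is invertible while $N^{m}$ is nilpotent, so their spectra are disjoint. This is precisely the mechanism that renders the value of the index irrelevant, which is the point of the proposition; everything else is bookkeeping with the Core EP blocks.
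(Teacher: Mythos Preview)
Your argument is correct and follows the same Core EP block-analysis route as the paper, with only a cosmetic difference in where the ``pass to a power $\ge k$'' step is performed. The paper iterates the commutation relation at the outset to obtain $A^{sm}X=XA^{sm}$ with $sm\ge k$, so that $N^{sm}=0$ and all four block equations are read off simultaneously; from the $(2,1)$-block it gets $X_{3}=0$, hence $\Delta^{-1}=TT^{*}$, and from the $(1,2)$-block it gets $X_{2}=T^{-(sm+1)}\tilde T_{sm}=T^{-(k+1)}\tilde T_{k}$ via \eqref{T tilde property II}. You instead stay at the given power $m$ and exploit $N(I_{n-t}-Q_{N})=0$ to kill the $(2,1)$-block directly, then subtract $A^{d}$ and iterate only the residual Sylvester relation $T^{m}W=WN^{m}$ at the end. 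Both mechanisms are the same in spirit---nilpotency of $N$ plus invertibility of $T$---and your explicit observation about $N(I_{n-t}-Q_{N})$ is a clean way to avoid lifting the whole commutation relation before reducing $X$ to upper-triangular form.
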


\begin{proof}
$(a) \Rightarrow (b) \Rightarrow (c)$ are trivial.
\\  $(c) \Rightarrow (a)$ Let $s\in \mathbb{N}$ be such that $sm\geq k$. Premultiplying by $A^{m}$ both sides of the equality $A^m X= X A^m$ we have $$A^{2m} X=A^{m}A^m X= A^{m}X A^m=X A^m A^m=X A^{2m}.$$ Following in this way, we arrive at $A^{sm}X=XA^{sm}$. \\
Since $X=Q_AZ$, $Z\in A\{2\}$, and $\Ra(Z)=\Ra(A^k)$, from \eqref{Q_A} and Lemma \ref{lemma Z} we have
\begin{equation}\label{Q_AZ} 
X = U \left[\begin{array}{cc}
T^* \Delta  & T^* \Delta  T Z_2 \\
(I_{n-t}-Q_N)S^* \Delta  & (I_{n-t}-Q_N)S^* \Delta T Z_2
\end{array}\right]
U^*:=U \left[\begin{array}{cc}
X_1  & X_2 \\
X_3  & X_4
\end{array}\right]
U^*.
\end{equation} 
 It is clear that 
\begin{equation} \label{Am}
A^{sm}= U\left[\begin{array}{cc}
T^{sm} & \tilde{T}_{sm}\\
0 & N^{sm}
\end{array}\right]U^*=U\left[\begin{array}{cc}
T^{sm} & \tilde{T}_{sm}\\
0 & 0
\end{array}\right]U^*,
\end{equation}
where $\tilde{T}_m$ has been defined in  (\ref{T tilde}).
\\ By using \eqref{Q_AZ} and \eqref{Am}, it is easy to see that $A^{sm} X= X A^{sm}$  if and only if the following conditions simultaneously hold:
\\(i) $T^{sm} X_1+\tilde{T}_{sm} X_3=X_1T^{sm}$,
\\(ii) $T^{sm} X_2+\tilde{T}_{sm} X_4=X_1 \tilde{T}_{sm}$, 
\\(iii) $0=X_3T^{sm}$,
\\(iv) $0=X_3\tilde{T}_{sm}$.
\\ From (iii) and nonsingularity of $T$ we deduce that $X_3=0$. Thus,  \eqref{Q_AZ} implies $(I_{n-t}-Q_N)S^* \Delta=0$, and therefore $\Delta^{-1}=TT^*$, 
$X_1=T^{-1}$ and $X_4=0$. So,  from (ii)
\begin{equation*} 
X = U\left[\begin{array}{cc}
T^{-1} & X_2\\
0 & 0
\end{array}\right]U^*,
\end{equation*}
where $ X_2=T^{-(sm+1)} \tilde{T}_{sm}$.  According to (\ref{T tilde property II}) we have  $$X_2=T^{-(k+1)}\tilde{T}_k.$$ Hence, $X=A^d$ by using Table \ref{core EP representation}.
\end{proof}

Similarly, one can prove the following result. 

\begin{proposition} \label{teo caract} Let  $A\in \Cnn$ with $\text{Ind}(A)=k$. Let $X\in A\{2\}$ be such that $\Ra(X)= \Ra(A^k)$. The following conditions are equivalent:
\begin{enumerate}[(a)]
\item $X=A^d$;
\item $A^m X= X A^m$, for all $m \in \N$; 
\item $A^m X= X A^m$, for some $m \in \N$. 
\end{enumerate}
\end{proposition}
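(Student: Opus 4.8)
The plan is to mirror the proof of Proposition \ref{thm Q_AZ 2} almost verbatim, the only difference being that now $X$ itself (rather than $Q_A Z$) is the outer inverse with prescribed range. First I would observe that $(a)\Rightarrow(b)\Rightarrow(c)$ are trivial, so the entire content is the implication $(c)\Rightarrow(a)$. As before, starting from $A^m X = X A^m$ for some fixed $m$, I would premultiply repeatedly by $A^m$ to get $A^{sm}X = X A^{sm}$ for every $s\in\N$, and then choose $s$ with $sm\ge k$ so that $N^{sm}=0$ and $A^{sm}$ has the block form in \eqref{Am}.

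The key structural input is that by Lemma \ref{lemma Z} (applied with $Z=X$, since $X\in A\{2\}$ and $\Ra(X)=\Ra(A^k)$), we already know
\begin{equation*}
X = U\left[\begin{array}{cc} T^{-1} & X_2 \\ 0 & 0 \end{array}\right]U^*,
\end{equation*}
with $X_2$ arbitrary a priori. This is actually simpler than in Proposition \ref{thm Q_AZ 2}, where the lower-left and lower-right blocks had to be forced to zero by the commutation relation; here they vanish automatically. So I only need to extract information about $X_2$. Writing out $A^{sm}X = X A^{sm}$ with the two block matrices, the $(1,1)$ and $(2,\cdot)$ blocks give identities automatically or trivially ($0=0$), and the $(1,2)$ block yields $T^{sm}X_2 = T^{-1}\tilde T_{sm} + X_2 N^{sm} = T^{-1}\tilde T_{sm}$ (using $N^{sm}=0$), hence $X_2 = T^{-(sm+1)}\tilde T_{sm}$. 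Then by \eqref{T tilde property II}, $\tilde T_{sm} = T^{sm-k}\tilde T_k$, so $X_2 = T^{-(k+1)}\tilde T_k$, which is exactly the $(1,2)$ block of $A^d$ in Table \ref{core EP representation}. Comparing all blocks gives $X = A^d$.

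I do not anticipate a genuine obstacle: the argument is a routine adaptation, and the only point requiring a moment's care is to confirm that the hypotheses of Lemma \ref{lemma Z} are satisfied by $X$ here, namely that $X\in A\{2\}$ together with $\Ra(X)=\Ra(A^k)$ is the same pair of conditions ``$Z\in A\{2\}$ and $\Ra(Z)=\Ra(A^k)$'' appearing in that lemma — which it plainly is. One should also note explicitly that the blocks of $X$ are partitioned conformally with \eqref{core EP decomposition}, and that nilpotency of $N$ (index $k$) is what makes the choice $sm\ge k$ work; these are the same details already handled in the preceding proof, so the phrase ``Similarly, one can prove'' in the text is fully justified, and a short paragraph pointing to Lemma \ref{lemma Z}, the block computation, and \eqref{T tilde property II} suffices.
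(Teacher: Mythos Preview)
Your proposal is correct and follows essentially the same route as the paper's proof: both use Lemma~\ref{lemma Z} to obtain the block form of $X$, pass to $A^{sm}X=XA^{sm}$ with $sm\ge k$, read off $X_2=T^{-(sm+1)}\tilde T_{sm}$ from the $(1,2)$ block, and then invoke \eqref{T tilde property II} and Table~\ref{core EP representation} to conclude $X=A^d$. Your write-up is in fact a bit more explicit about the block computation, but there is no substantive difference in approach.
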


\begin{proof} 
$(a) \Rightarrow (b) \Rightarrow (c)$ are trivial.
\\  $(c) \Rightarrow (a)$ Let $s\in \mathbb{N}$ be such that $sm\geq k$. As in the proof of Proposition \ref{thm Q_AZ 2} we have  $A^{sm}X=XA^{sm}$. According to Lemma  \ref{lemma Z} we obtain
\begin{equation*}
X=U\left[\begin{array}{cc}
T^{-1} & X_2 \\
0 & 0
\end{array}\right]U^*,
\end{equation*}
where $X_2$ is an arbitrary matrix of adequate size. 
By using \eqref{Am}, it is easy to see that $A^{sm} X= X A^{sm}$  if and only if $ X_2=T^{-(sm+1)}\tilde{T}_{sm}$. Therefore,  (\ref{T tilde property II}) and Table \ref{core EP representation}  complete the proof. 
\end{proof}

\begin{proposition} \label{cor ZP_A} 
Let  $A\in \Cnn$ with $\text{Ind}(A)=k$. Let $X=ZP_A$, where $Z\in A\{2\}$ and $\Ra(Z)=\Ra(A^k)$. The following conditions are equivalent:
\begin{enumerate}[(a)]
\item $X=A^d$;
\item $A^m X= X A^m$, for all $m \in \N$; 
\item $A^m X= X A^m$, for some $m \in \N$.
\end{enumerate}
\end{proposition}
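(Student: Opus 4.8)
The plan is to mirror the proof of Proposition~\ref{thm Q_AZ 2} almost verbatim, replacing the left multiplication by $Q_A$ with a right multiplication by $P_A$. As before, the implications $(a) \Rightarrow (b) \Rightarrow (c)$ are trivial, so the work is in $(c) \Rightarrow (a)$. First I would pick $s \in \N$ with $sm \ge k$ and, by premultiplying the equality $A^m X = X A^m$ repeatedly by $A^m$ as in Proposition~\ref{thm Q_AZ 2}, upgrade it to $A^{sm} X = X A^{sm}$. Next, I would write $X = ZP_A$ in block form: since $Z \in A\{2\}$ with $\Ra(Z) = \Ra(A^k)$, Lemma~\ref{lemma Z} gives $Z \stackrel{*}\approx \left[\begin{smallmatrix} T^{-1} & Z_2 \\ 0 & 0 \end{smallmatrix}\right]$, and combining this with the expression for $P_A$ in \eqref{Q_A} yields
\begin{equation*}
X \stackrel{*}\approx \left[\begin{array}{cc}
T^{-1} & Z_2 P_N \\
0 & 0
\end{array}\right] =: \left[\begin{array}{cc}
X_1 & X_2 \\
X_3 & X_4
\end{array}\right].
\end{equation*}

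With $X$ and $A^{sm}$ (from \eqref{Am}) both in block-triangular form, I would expand $A^{sm} X = X A^{sm}$ into the four block equations exactly as in Proposition~\ref{thm Q_AZ 2}: the $(2,1)$ and $(2,2)$ entries are automatically $0 = 0$ because $X_3 = 0$ and $X_4 = 0$ here, so the content is in the $(1,1)$ and $(1,2)$ blocks, namely $T^{sm} X_1 = X_1 T^{sm}$ (trivially satisfied since $X_1 = T^{-1}$) and $T^{sm} X_2 + \tilde{T}_{sm} X_4 = X_1 \tilde{T}_{sm}$, which reduces to $T^{sm} X_2 = T^{-1}\tilde{T}_{sm}$, i.e. $X_2 = T^{-(sm+1)} \tilde{T}_{sm}$. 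Invoking \eqref{T tilde property II} to rewrite $T^{-(sm+1)}\tilde{T}_{sm} = T^{-(k+1)}\tilde{T}_k$, I would then read off from Table~\ref{core EP representation} that $X = A^d$.

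The only genuine subtlety — and the step I would flag as requiring a line of justification rather than a bald assertion — is that in this case the block structure of $X$ already forces $X_3 = 0$ and $X_4 = 0$ from the start (because $ZP_A$ inherits zero bottom rows from $Z$), so unlike in Proposition~\ref{thm Q_AZ 2} there is no need to derive the vanishing of the off-diagonal projector term $(I_{n-t} - Q_N)S^*\Delta$ from condition (iii); the commutativity hypothesis is used only to pin down $X_2$. This makes the present proof strictly shorter than that of Proposition~\ref{thm Q_AZ 2}. I would therefore write: ``As in the proof of Proposition~\ref{thm Q_AZ 2}, we obtain $A^{sm} X = X A^{sm}$. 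By Lemma~\ref{lemma Z} and \eqref{Q_A}, $X = ZP_A \stackrel{*}\approx \left[\begin{smallmatrix} T^{-1} & Z_2 P_N \\ 0 & 0 \end{smallmatrix}\right]$. Using \eqref{Am}, the equality $A^{sm} X = X A^{sm}$ holds if and only if $Z_2 P_N = T^{-(sm+1)} \tilde{T}_{sm}$, and then \eqref{T tilde property II} together with Table~\ref{core EP representation} gives $X = A^d$.'' That is the whole argument; no further obstacle is expected.
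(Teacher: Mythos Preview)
Your proof is correct, but the paper takes a shorter, more structural route. Rather than re-running the block computation, the paper observes that $X = ZP_A$ is \emph{itself} an outer inverse of $A$ with $\Ra(X) = \Ra(A^k)$: from $ZAZ = Z$ and $P_A A = A$ one gets $XAX = ZP_A A Z P_A = ZAZP_A = ZP_A = X$, and then $\Ra(X) = \Ra(XA) = \Ra(ZP_A A) = \Ra(ZA) = \Ra(Z) = \Ra(A^k)$. This places $X$ squarely under the hypotheses of Proposition~\ref{teo caract}, and the equivalence follows at once. Your approach instead writes $ZP_A$ in block form via Lemma~\ref{lemma Z} and \eqref{Q_A} and then repeats the computation of Proposition~\ref{teo caract} from scratch; this works perfectly well and makes the shape of $X$ explicit, but it duplicates effort already packaged in Proposition~\ref{teo caract}. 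The paper's reduction buys brevity and isolates the reusable fact that right-multiplication by $P_A$ preserves both the outer-inverse property and the range condition $\Ra(\,\cdot\,)=\Ra(A^k)$.
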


\begin{proof}
Let $X=ZP_A$. Note that $ZAZ=Z$ implies $XAX=X$. In consequence, as $\Ra(Z)=\Ra(A^k)$ we obtain \[\Ra(X)=\Ra(XA)=\Ra(ZP_AA)=\Ra(ZA)=\Ra(Z)=\Ra(A^k).\] Now, the result follows from Proposition \ref{teo caract}.
\end{proof}

\begin{theorem} \label{k-commutative} Let  $A\in \Cnn$ with $\text{Ind}(A)=k$. 
For each generalized inverse
$$X\in \{A^{\odagger}, A^{d,\dagger}, A^{\weak}, A^{\weak,\dag}, A^{\dagger, d}, A^{c,\dagger}, A^{\dag, \odagger}, A^{\dag, \weak}\},$$  the following conditions are equivalent:
\begin{enumerate}[(a)]
\item $X=A^d$;
\item $A^m X= X A^m$, for all $m \in \N$; 
\item $A^m X= X A^m$, for some $m \in \N$.
\end{enumerate}
\end{theorem}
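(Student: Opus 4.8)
The plan is to reduce Theorem \ref{k-commutative} to the three preparatory propositions already established, by recognizing each of the eight generalized inverses in the list as having one of the three structural forms covered there. The key observation is that every inverse $X$ in the list is a $\{2\}$-inverse of $A$, and in each case the relevant range or the relevant "factored" form matches the hypotheses of Proposition \ref{thm Q_AZ 2}, Proposition \ref{teo caract}, or Proposition \ref{cor ZP_A}. Concretely, from Table \ref{core EP representation} and the projector formulas \eqref{Q_A}, \eqref{proyector}, one reads off that $A^{\odagger}$, $A^{\weak}$, and $A^{d,\dagger}$ all have range equal to $\Ra(A^k)$ (their core-EP representations have zero bottom row and the first column block spans the top $t$ coordinates), so Proposition \ref{teo caract} applies directly to them. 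For $A^{d,\dagger}=A^dP_A$ and $A^{\weak,\dag}=A^{\weak}P_A$ one sees they are of the form $ZP_A$ with $Z\in\{A^d, A^{\weak}\}$, each of which is a $\{2\}$-inverse with range $\Ra(A^k)$; hence Proposition \ref{cor ZP_A} applies. Finally $A^{\dagger,d}=Q_AA^d$, $A^{c,\dagger}=Q_AA^{d,\dagger}$, $A^{\dag,\odagger}=Q_AA^{\odagger}$, and $A^{\dag,\weak}=Q_AA^{\weak}$ are of the form $Q_AZ$ with $Z$ a $\{2\}$-inverse of range $\Ra(A^k)$, so Proposition \ref{thm Q_AZ 2} applies.

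The first step, then, is to verify the two facts that make this reduction legitimate: (1) each $X$ in the list lies in $A\{2\}$, i.e. $XAX=X$; and (2) the auxiliary matrix $Z$ used to write $X=Q_AZ$ or $X=ZP_A$ (or $X$ itself, in the direct cases) is a $\{2\}$-inverse of $A$ with $\Ra(Z)=\Ra(A^k)$. Both are essentially bookkeeping against Table \ref{core EP representation}: for the direct cases I would compute, using the block form, that the displayed matrix $X$ satisfies $XAX=X$ and that $P_{A^k}X=X$ together with $XA^{k+1}=A^k$, which by \cite[Lemma 4.2]{FeLeTh3} is exactly "$X\in A\{2\}$ and $\Ra(X)=\Ra(A^k)$." For the $Q_AZ$ and $ZP_A$ cases the needed $Z$ is respectively $A^d$, $A^{d,\dagger}$, $A^{\odagger}$, $A^{\weak}$ (for the $Q_A$-prefixed ones) and $A^d$, $A^{\weak}$ (for the $P_A$-suffixed ones); each of these four matrices $A^d, A^{d,\dagger}, A^{\odagger}, A^{\weak}$ is classically known to be a $\{2\}$-inverse with range $\Ra(A^k)$ — this can be checked from Table \ref{core EP representation} in one line each, or cited.

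Once the classification is in place, the proof is a short case split: for $X\in\{A^{\odagger}, A^{\weak}, A^{d,\dagger}\}$ invoke Proposition \ref{teo caract}; for $X\in\{A^{d,\dagger}, A^{\weak,\dag}\}$ invoke Proposition \ref{cor ZP_A} (noting $A^{d,\dagger}$ is covered by either route); and for $X\in\{A^{\dagger,d}, A^{c,\dagger}, A^{\dag,\odagger}, A^{\dag,\weak}\}$ invoke Proposition \ref{thm Q_AZ 2}. In every case the cited proposition gives the equivalence of (a), (b), (c) verbatim, so nothing further is needed. I would present this as a single paragraph assigning each $X$ its form and the proposition that handles it.

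The main obstacle — really the only nontrivial point — is identifying, for the four $Q_A$-prefixed inverses and the two $P_A$-suffixed inverses, the correct inner factor $Z$ and confirming $\Ra(Z)=\Ra(A^k)$; the CMP inverse $A^{c,\dagger}=Q_AA^{d,\dagger}$ is the subtle one, since one must observe that $A^{d,\dagger}$ (not $A^d$) is the right choice of $Z$ and that it still has range $\Ra(A^k)$ despite the $P_N$ factor appearing in its core-EP form. Everything else is routine matching against Table \ref{core EP representation}. I expect the write-up to be under a page: a few lines to record the $\{2\}$-inverse-with-prescribed-range facts, then the case split citing Propositions \ref{thm Q_AZ 2}, \ref{teo caract}, and \ref{cor ZP_A}.
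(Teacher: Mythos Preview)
Your proposal is correct and matches the paper's own proof essentially step for step: the paper handles $A^{\odagger}$ and $A^{\weak}$ via Proposition~\ref{teo caract}, $A^{d,\dagger}$ and $A^{\weak,\dag}$ via Proposition~\ref{cor ZP_A} (writing them as $ZP_A$ with $Z=A^d$ and $Z=A^{\weak}$ respectively), and the four $Q_A$-prefixed inverses via Proposition~\ref{thm Q_AZ 2}, citing Table~\ref{Generalized inverses} for the factorizations and the known facts that $A^d$, $A^{\odagger}$, $A^{\weak}$, $A^{d,\dagger}$ are $\{2\}$-inverses with range $\Ra(A^k)$. The only cosmetic difference is that the paper does not also route $A^{d,\dagger}$ through Proposition~\ref{teo caract}, but your observation that either proposition works for it is fine.
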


\begin{proof}
If $X=A^{\odagger}$,  it is well known that $X$ is an outer inverse of $A$ and its range space is $\Ra(A^k)$ \cite{PrMo}. So, by Proposition \ref{teo caract}, we have the required equivalence. \\
If $X=A^{d,\dag}$, by using Table \ref{Generalized inverses} we have $X=Z P_A$ with   $Z=A^d$. Clearly, $A^d\in A\{2\}$ and $\Ra(A^d)=\Ra(A^k)$. So, the result follows from Proposition \ref{cor ZP_A}.\\
If $X=A^{\weak}$, by \cite[Remark 3.5]{WaCh} we know that $A^{\weak}AA^{\weak}=A^{\weak}$ and $\Ra(A^{\weak})=\Ra(A^k)$. Thus, Proposition \ref{teo caract} implies the equivalence of conditions (a)-(c). \\
If $X=A^{\weak, \dag}$, from Table \ref{Generalized inverses} we have $X=Z P_A$ with   $Z=A^{\weak}$. 
As mentioned above,
$A^{\weak}\in A\{2\}$ and $\Ra(A^{\weak})=\Ra(A^k)$. Thus, Proposition  \ref{cor ZP_A} gives the result. \\
Similarly, for $X\in \{A^{\dag,d}, A^{c,\dagger}, A^{\dag, \odagger}, A^{\dag, \weak}\}$,  from Table \ref{Generalized inverses} and Proposition \ref{thm Q_AZ 2} we can obtain the required equivalences.  
\end{proof}

Other important consequence from  Proposition \ref{teo caract} is a new characterization of the Drazin inverse (and the particular case for the group inverse), which closes this section.

\begin{theorem}\label{caracdrazin}
Let  $A\in \Cnn$ with $\text{Ind}(A)=k$.  The following conditions are equivalent:
\begin{enumerate}[(a)]
\item $X=A^d$;
\item $AX^2=X$, $A^kX=XA^k$,  $\Ra(A^k) \subseteq \Ra(X)$.
\end{enumerate} 
\end{theorem}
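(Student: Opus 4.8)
The plan is to show the two nontrivial implications $(a)\Rightarrow(b)$ and $(b)\Rightarrow(a)$, with the latter being where the real work lies. For $(a)\Rightarrow(b)$, one uses the standard defining properties of the Drazin inverse: from $A^d A A^d = A^d$ and $A A^d = A^d A$ we immediately get $A(A^d)^2 = (A^d A)A^d = (A A^d)A^d = A A^d A^d$, and since $A A^d A^d = A^d (A A^d) = A^d$ we obtain $AX^2 = X$ with $X=A^d$. The commutativity $A^k A^d = A^d A^k$ follows at once from $A A^d = A^d A$, and $\Ra(A^k)\subseteq\Ra(A^d)$ is the well-known equality $\Ra(A^d)=\Ra(A^k)$. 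So this direction is essentially a recollection of basic facts.

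For the harder direction $(b)\Rightarrow(a)$, my strategy is to reduce the three hypotheses to the hypotheses of Proposition \ref{teo caract}, namely that $X$ is an outer inverse of $A$ with $\Ra(X)=\Ra(A^k)$, together with a commutativity $A^m X = X A^m$ for some $m$. First, from $AX^2=X$ I would derive that $X$ is an outer inverse of $A$: premultiplying (or manipulating) $AX^2=X$ gives $X(AX)X = X\cdot(X)\cdot$\dots — more precisely, $XAX = X A X$, and using $AX^2=X$ one should get $XAX = X$ directly by substituting $X=AX^2$ into the left occurrence or the right occurrence of $X$; I need to check that $AX^2=X$ alone forces $XAX=X$, which it does since $XAX = X A (X) = X A (A X^2) \cdot$\dots — here I must be careful, so the cleanest route is: $AX^2=X$ means $X\in A\{2\}$-like but I actually need $XAX=X$; note $AX^2 = X \Rightarrow A X \cdot X = X$, and then $X(AX)\cdot$ is not immediately it. The correct observation: if $AX^2=X$ then $X = AX^2 = (AX)X$, so $AX$ is a left identity for $X$ in the appropriate sense, and then $XAX = X(AX) \cdot$? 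This needs the idempotency of $AX$: from $AX^2=X$, multiply on the left by $A$: $A^2X^2 = AX$, i.e. $(AX)(AX)=A X$ only if $AX^2 X^{-1}\dots$ — so $AX$ need not be idempotent a priori. The honest approach: combine $AX^2=X$ with the third hypothesis $\Ra(A^k)\subseteq\Ra(X)$.

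So the key steps, in order, are: (1) Use the core-EP decomposition \eqref{core EP decomposition} and write $X = U\begin{bmatrix}X_1&X_2\\X_3&X_4\end{bmatrix}U^*$. (2) Translate $AX^2=X$ into block equations; in particular the condition $\Ra(A^k)\subseteq\Ra(X)$ combined with \eqref{Ak} should force $X_1$ to be invertible and pin down the lower blocks, showing $\Ra(X)=\Ra(A^k)$ and that $X$ is an outer inverse — essentially recovering the Lemma \ref{lemma Z} normal form $X=U\begin{bmatrix}T^{-1}&X_2\\0&0\end{bmatrix}U^*$. (3) Once $X$ has this form and $X\in A\{2\}$ with $\Ra(X)=\Ra(A^k)$, invoke Proposition \ref{teo caract}: the hypothesis $A^kX=XA^k$ is precisely condition $(c)$ there with $m=k$, hence $X=A^d$. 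The main obstacle I anticipate is step (2): showing that $AX^2=X$ together with $\Ra(A^k)\subseteq\Ra(X)$ (rather than the full outer-inverse property) suffices to deduce $X\in A\{2\}$ and $\Ra(X)=\Ra(A^k)$ — equivalently, ruling out the nilpotent-part contamination in the $X_3,X_4$ blocks. I expect that $AX^2=X$ forces $\Ra(X)\subseteq\Ra(A)$ and more; iterating, $\Ra(X)\subseteq\bigcap_j\Ra(A^j)=\Ra(A^k)$, which with the reverse inclusion from hypothesis gives $\Ra(X)=\Ra(A^k)$, and then $AX^2=X$ restricted to this range yields $XAX=X$. Handling this inclusion-chasing cleanly, either via the decomposition or range-theoretically, is the crux; the rest is routine bookkeeping and a citation of Proposition \ref{teo caract}.
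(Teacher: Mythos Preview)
Your overall architecture is right and matches the paper's: both reduce $(b)\Rightarrow(a)$ to Proposition~\ref{teo caract} by establishing $X\in A\{2\}$ with $\Ra(X)=\Ra(A^k)$ and then invoking the commutation $A^kX=XA^k$. Where you and the paper diverge is in \emph{how} those two preliminary facts are obtained, and the order in which they are proved.

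The paper first proves $XAX=X$ by a short descent argument that uses \emph{both} $AX^2=X$ and $A^kX=XA^k$: post-multiplying $A^kX=XA^k$ by $X$ and using $AX^2=X$ gives $XA^kX=A^{k-1}X$; iterating drops the exponent one step at a time until $XA^2X=AX$, and one more post-multiplication by $X$ yields $XAX=X$. Only afterwards does the paper get $\Ra(X)=\Ra(A^k)$, by citing \cite[Lemma~4.1]{FeLeTh3}. Your route reverses this: you observe (correctly) that $AX^2=X$ iterates to $X=A^jX^{j+1}$, hence $\Ra(X)\subseteq\Ra(A^k)$, and together with the hypothesis $\Ra(A^k)\subseteq\Ra(X)$ this gives the range equality directly and without any external citation --- a genuine improvement over the paper's argument at that point.

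The one soft spot in your proposal is the sentence ``$AX^2=X$ restricted to this range yields $XAX=X$'': as written this is an assertion, not an argument, and your earlier attempts show that $AX^2=X$ alone does not force $XAX=X$. It \emph{does} follow once you have $\Ra(X)=\Ra(A^k)$, and here is one clean way to close the gap in your framework. Using $P_{A^k}X=X$ gives $X_3=0$, $X_4=0$ in the block form, while $\Ra(A^k)\subseteq\Ra(X)$ forces $[X_1\ X_2]$ to have full row rank $t$. The equation $AX^2=X$ then reads $TX_1[X_1\ X_2]=[X_1\ X_2]$, whence $TX_1=I_t$, i.e.\ $X_1=T^{-1}$; by Lemma~\ref{lemma Z} this is exactly $X\in A\{2\}$ with $\Ra(X)=\Ra(A^k)$. (Alternatively and coordinate-free: $\rk(X^2)=\rk(AX^2)=\rk(X)$ shows $X$ is group-invertible, then $AX^2=X$ gives $AX=XX^{\#}$ and hence $XAX=X$.) Either way, Proposition~\ref{teo caract} finishes the job. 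So your plan is sound, but that step needs to be written out; the paper's descent trick is the other natural way to supply it.
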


\begin{proof} $(a) \Rightarrow (b)$ is obvious.
\\ $(b)  \Rightarrow (a)$ Suppose that 
\begin{equation} \label{f1} A^k X=X A^k.
\end{equation} We claim that $XAX=X$. \\ Indeed, if $k=1$, it is obvious. We assume $k>1$. Post-multiplying (\ref{f1}) by $X$ we have $ A^k X^2=X A^k X$. Since $AX^2=X$ we get $$XA^kX=A^k X^2=A^{k-1}AX^2=A^{k-1}X.$$ Again, post-multiplying this last equality by $X$, we get $XA^kX^2=A^{k-1}X^2$, and we deduce that $XA^kX^2=A^{k-2}AX^2= A^{k-2}X$. 
On the other hand, we also have $XA^kX^2=X A^{k-1}A X^2=XA^{k-1}X$. Then, 
$$X A^{k-1} X= A^{k-2}X.$$ 
Following this way, we derive $XA^2X=AX$. Post-multiplying by $X$ once again, we deduce that  $XA^2X^2=AX^2= X$. Thus, 
\begin{equation} \label{f2} X= X A^2 X^2= X A A X^2=XAX.
\end{equation}
As $\Ra(A^k) \subseteq \Ra(X)$, from \cite[Lemma 4.1]{FeLeTh3}
we deduce that  
\begin{equation} \label{f3} {\Ra}(X)={\Ra}(A^k).
\end{equation} Finally (\ref{f1})-(\ref{f3}) and Proposition \ref{teo caract} imply $X=A^d$.
\end{proof}

\begin{corollary}
Let $A\in \Cnn$. Then $X=A^\#$ if and only if $AX^2=X$, $AX=XA$, $\Ra(A) \subseteq \Ra(X)$. 
\end{corollary}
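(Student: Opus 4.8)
The plan is to derive this corollary directly from Theorem \ref{caracdrazin} by specializing to the case $\ind(A)\le 1$, in which the Drazin inverse coincides with the group inverse. First I would recall that the group inverse $A^\#$ exists if and only if $\ind(A)\le 1$, i.e.\ $k=1$ (taking the convention that $A^\#$ exists precisely when $A$ has index at most one), and that in this situation $A^d=A^\#$. Thus the corollary is the $k=1$ instance of Theorem \ref{caracdrazin}, provided we check that the condition ``$X=A^\#$'' is equivalent to the stated conjunction without any a priori assumption on $\ind(A)$.

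The forward direction is immediate: if $X=A^\#$, then by definition $AX=XA$ and $XAX=X$; moreover $AX^2=XAX=X$, and $\Ra(A)=\Ra(X)$ (since $X=A^\#$ implies $\Ra(X)=\Ra(A)$), in particular $\Ra(A)\subseteq\Ra(X)$. For the converse, suppose $AX^2=X$, $AX=XA$, and $\Ra(A)\subseteq\Ra(X)$. The key point is that these three conditions force $\ind(A)\le 1$, so that $A^\#$ exists; then Theorem \ref{caracdrazin} with $k=1$ (where the relation $A^kX=XA^k$ becomes exactly $AX=XA$) yields $X=A^d=A^\#$. To see that $\ind(A)\le 1$: from $AX=XA$ and $AX^2=X$ we get, as in the proof of Theorem \ref{caracdrazin} (the $k>1$ argument is vacuous here but the chain $X=XA^2X^2=XAX$ still applies, or more directly $XAX=XA\cdot X=AX\cdot X=AX^2=X$), that $XAX=X$, so $X$ is an outer inverse commuting with $A$; then $\Ra(A)\subseteq\Ra(X)$ combined with \cite[Lemma 4.1]{FeLeTh3} gives $\Ra(X)=\Ra(A)$, hence $\Ra(A)=\Ra(XA)\subseteq\Ra(X A)$ and in particular $A=AXA$ would follow from $X$ being simultaneously inner; but even without inner-ness, $\Ra(A^2)=\Ra(AXA^2)$ and the commuting outer inverse with $\Ra(X)=\Ra(A)$ forces $\rk(A^2)=\rk(A)$.

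The only mild obstacle is the bookkeeping around existence of $A^\#$: Theorem \ref{caracdrazin} is stated for a matrix $A$ with $\ind(A)=k\ge 1$, so to invoke it with $k=1$ we must first confirm we are genuinely in the index-one case. I would handle this by isolating the short implication ``$AX^2=X$, $AX=XA$, $\Ra(A)\subseteq\Ra(X)$ $\Rightarrow$ $\ind(A)\le1$'' as the crux, proving it via: $XAX=X$ follows from $AX=XA$ and $AX^2=X$; then $\Ra(X)=\Ra(A)$ by \cite[Lemma 4.1]{FeLeTh3}; and finally, writing $A$ in its core EP decomposition \eqref{core EP decomposition}, a commuting outer inverse with range exactly $\Ra(A^k)$ that additionally satisfies $\Ra(A)\subseteq\Ra(X)=\Ra(A^k)$ forces $\Ra(A)=\Ra(A^k)$, i.e.\ $k=1$. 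Once index one is established, everything reduces to the already-proved theorem, and the proof is complete in a couple of lines.

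\begin{proof}
If $X=A^\#$, then $AX=XA$, $XAX=X$, so $AX^2=XAX=X$, and $\Ra(A)=\Ra(X)$; in particular all three conditions hold. Conversely, assume $AX^2=X$, $AX=XA$, and $\Ra(A)\subseteq\Ra(X)$. From $AX=XA$ and $AX^2=X$ we get $XAX=(XA)X=(AX)X=AX^2=X$, so $X$ is an outer inverse of $A$. Since $\Ra(A)\subseteq\Ra(X)$, \cite[Lemma 4.1]{FeLeTh3} gives $\Ra(X)=\Ra(A^k)$ where $k=\ind(A)$, hence $\Ra(A)\subseteq\Ra(A^k)$, which forces $\rk(A)=\rk(A^k)$ and thus $\ind(A)\le 1$. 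Consequently $A^\#$ exists and $A^\#=A^d$. Applying Theorem \ref{caracdrazin} with $k=1$ (so that $A^kX=XA^k$ reads $AX=XA$) yields $X=A^d=A^\#$.
\end{proof}
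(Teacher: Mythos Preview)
Your proof is correct and follows the route the paper intends: the corollary is stated in the paper without proof, as an immediate specialization of Theorem~\ref{caracdrazin} to $k=1$ (where $A^d=A^\#$). Your argument does exactly this reduction, with the added care of showing that the three hypotheses force $\ind(A)\le 1$ (via $XAX=X$, then $\Ra(X)=\Ra(A^k)$ from \cite[Lemma~4.1]{FeLeTh3}, hence $\Ra(A)\subseteq\Ra(A^k)$), so that $A^\#$ actually exists---a point the paper leaves implicit.
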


\section{New characterizations of recent classes of matrices}

In this section, we obtain  new characterizations  of  $k$-EP matrices (or equivalently $k$-CMP matrices), $k$-DMP matrices, and dual $k$-DMP matrices, as well as we investigate the sets of  $k$-WC  and dual $k$-WC matrices.

The set of $k$-DMP matrices was characterized in \cite{FeLeTh3}  as follows:
\begin{equation} \label{k DMP matrices}
A\in \C_{n}^{k,d\dag}  \Leftrightarrow A^{d,\dag}=A^d \Leftrightarrow   A^{c,\dag}=A^{\dag, d} \Leftrightarrow \Nu(N^*)\subseteq \Nu(\widetilde{T}_k),
\end{equation}
provided that $A$ is written as in (\ref{core EP decomposition}).
Next, we give some more characterizations of $k$-DMP matrices. 
\begin{theorem} \label{class dmp}  Let  $A\in \Cnn$ be written as in (\ref{core EP decomposition}) with $\text{Ind}(A)=k$. The following conditions are equivalent:
\begin{enumerate}[(a)]
\item $A\in \C_{n}^{k,d \dagger}$;
\item $A^m A^{d,\dagger}=A^{d,\dagger}A^m$, for all $m\in \N$;
\item $A^m A^{d,\dagger}=A^{d,\dagger}A^m$, for some $m\in \N$;
\item $A^{d,\dagger}=A^d$;
\item $A^{k+1} A^\dag = A^k$.
\end{enumerate}
\end{theorem}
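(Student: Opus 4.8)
The plan is to establish the chain of implications
\[
(a) \Leftrightarrow (d) \Leftrightarrow (b) \Leftrightarrow (c), \qquad (d) \Leftrightarrow (e),
\]
leaning heavily on the machinery already developed. The equivalences $(b)\Leftrightarrow(c)\Leftrightarrow(d)$ are essentially immediate from Theorem~\ref{k-commutative} applied to the generalized inverse $X=A^{d,\dagger}$: that theorem already tells us that $A^{d,\dagger}=A^d$ is equivalent to $A^mA^{d,\dagger}=A^{d,\dagger}A^m$ for all (respectively for some) positive integer $m$. So there is nothing new to do there; I would simply cite Theorem~\ref{k-commutative}.

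For $(a)\Leftrightarrow(d)$, the forward direction is the substantive one. By definition $A\in\C_n^{k,d\dagger}$ means $A^kA^{d,\dagger}=A^{d,\dagger}A^k$, i.e. condition $(c)$ holds with the specific choice $m=k$; hence by Theorem~\ref{k-commutative} we get $A^{d,\dagger}=A^d$, which is $(d)$. Conversely, if $A^{d,\dagger}=A^d$, then since $A^d$ commutes with every power of $A$ we get $A^kA^{d,\dagger}=A^kA^d=A^dA^k=A^{d,\dagger}A^k$, so $A\in\C_n^{k,d\dagger}$. Alternatively, I could just invoke the already-known equivalence $A\in\C_n^{k,d\dagger}\Leftrightarrow A^{d,\dagger}=A^d$ recorded in \eqref{k DMP matrices}, which makes $(a)\Leftrightarrow(d)$ a citation as well; I would mention this to keep the proof short.

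The only genuinely new content is the equivalence with $(e)$, the clean rank-free condition $A^{k+1}A^\dagger=A^k$. Here I would compute using the core EP decomposition~\eqref{core EP decomposition}. From Table~\ref{core EP representation} one reads off $A^{d,\dagger}\stackrel{*}\approx\bigl[\begin{smallmatrix}T^{-1}&T^{-(k+1)}\tilde T_k P_N\\0&0\end{smallmatrix}\bigr]$ and $A^d\stackrel{*}\approx\bigl[\begin{smallmatrix}T^{-1}&T^{-(k+1)}\tilde T_k\\0&0\end{smallmatrix}\bigr]$, so $(d)$ holds if and only if $T^{-(k+1)}\tilde T_k P_N=T^{-(k+1)}\tilde T_k$, equivalently $\tilde T_k P_N=\tilde T_k$, equivalently $\tilde T_k(I_{n-t}-P_N)=0$, which is exactly the condition $\Nu(N^*)\subseteq\Nu(\tilde T_k)$ appearing in \eqref{k DMP matrices} (using $I_{n-t}-P_N$ projects onto $\Nu(N^*)$). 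For $(e)$, I compute $A^{k+1}$ and $A^k$ in the decomposition: $A^{k+1}\stackrel{*}\approx\bigl[\begin{smallmatrix}T^{k+1}&\tilde T_{k+1}\\0&0\end{smallmatrix}\bigr]$ (using nilpotency $N^{k}=0$ to kill the lower blocks), $A^k$ as in~\eqref{Ak}, and $A^\dagger$ from Table~\ref{core EP representation}. Multiplying $A^{k+1}A^\dagger$ and comparing with $A^k$ block by block, the $(1,1)$ and $(2,\cdot)$ blocks match automatically, and the $(1,2)$ block reduces—after using $\tilde T_{k+1}=\tilde T_k N+T^kS$ from \eqref{T tilde property} and the formula $\Delta^{-1}=TT^*+S(I_{n-t}-Q_N)S^*$—to the same condition $\tilde T_k P_N=\tilde T_k$. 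So $(e)\Leftrightarrow(d)$ follows from matching a single block.

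The main obstacle I anticipate is the block multiplication for $(e)\Leftrightarrow(d)$: it requires carrying the somewhat unwieldy expression for $A^\dagger$ through a product with $A^{k+1}$, simplifying the term involving $N^\dagger$ via $P_N=NN^\dagger$ and the term $(I_{n-t}-Q_N)S^*\Delta$, and recognizing the residual expression as $T^{-(k+1)}\tilde T_k(I_{n-t}-P_N)$. This is routine but needs care with which projector ($P_N$ versus $Q_N$) appears where; I would organize it by first noting $A^{k+1}A^\dagger=A^kP_A$ (since $A^{k+1}A^\dagger=A^k\cdot AA^\dagger=A^kP_A$), which immediately shortens the computation to comparing $A^kP_A$ with $A^k$, and $P_A\stackrel{*}\approx\bigl[\begin{smallmatrix}I_t&0\\0&P_N\end{smallmatrix}\bigr]$ from \eqref{Q_A} makes the equivalence with $\tilde T_kP_N=\tilde T_k$ transparent.
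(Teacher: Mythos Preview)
Your proposal is correct. The equivalences $(a)\Leftrightarrow(b)\Leftrightarrow(c)\Leftrightarrow(d)$ are handled exactly as the paper does, by citing Theorem~\ref{k-commutative} with $X=A^{d,\dagger}$ (and noting that $(a)$ is the instance $m=k$ of $(c)$).

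The one place you diverge is in linking condition $(e)$ to the rest. The paper proves $(a)\Leftrightarrow(e)$ by a short purely algebraic argument using only the defining properties of $A^d$ and $A^\dagger$: from $A^kA^{d,\dagger}=A^{d,\dagger}A^k$ it derives $A^kA^\dagger=A^dA^k$ and then multiplies by $A$; conversely from $A^{k+1}A^\dagger=A^k$ it checks $A^kA^{d,\dagger}=A^{d,\dagger}A^k$ directly. No core EP decomposition is used for this step. Your route instead shows $(d)\Leftrightarrow(e)$ via the decomposition, and the observation $A^{k+1}A^\dagger=A^kP_A$ together with the block form of $P_A$ in \eqref{Q_A} makes this a one-line comparison reducing both $(d)$ and $(e)$ to $\tilde T_kP_N=\tilde T_k$. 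Your approach has the merit of exposing the common block condition (and tying in visibly with \eqref{k DMP matrices}); the paper's has the merit of being coordinate-free and avoiding any block computation. Either is perfectly acceptable, and your shortcut through $P_A$ neatly sidesteps the unwieldy full expression for $A^\dagger$ that you rightly flag as the main hazard.
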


\begin{proof}
$(b) \Leftrightarrow (c)\Leftrightarrow (d)$ It directly follows from Theorem  \ref{k-commutative} with $X=A^{d,\dag}$. \\
$(a)\Rightarrow (c)$ and $(d)\Rightarrow (a)$ are clear.
\newline $(a)\Rightarrow(e)$ Assume $A^{k} A^{d,\dag}=A^{d,\dag} A^k$. From definitions of Moore-Penrose and Drazin inverses, and by using Table \ref{Generalized inverses} we have 
\begin{equation*}
A^{k} A^{\dag} =  A^{d} A^{k+1} A^{\dag} = A^{k} (A^{d} A A^{\dag})  = (A^{d} A A^{\dag}) A^{k} = A^{d} A^{k}.
\end{equation*}
Now, multiplying on the left the above equation by $A$ and by using the properties $A^d A=AA^d$ and $A^d A^{k+1}=A^k$, item $(e)$ follows.
\newline  $(e)\Rightarrow(a)$ If $A^{k+1} A^{\dag} = A^{k}$, in the same manner as above, we get 
\begin{equation*}
A^{k} A^{d,\dag}=A^{k} A^{d} A A^{\dag}= A^{d} A^{k+1} A^{\dag}= A^{d} A^{k}=A^{d} A A^{\dag} A^{k}=A^{d,\dag}A^{k}. 
\end{equation*} 
\end{proof}

The set of dual $k$-DMP matrices was characterized in \cite{FeLeTh3}  as follows:
\begin{equation} \label{dual k DMP matrices}
A\in \C_{n}^{k,\dag d}  \Leftrightarrow A^{\dag,d}=A^d \Leftrightarrow   A^{c,\dag}=A^{d,\dag} \Leftrightarrow \Nu(N)\subseteq \Nu(S),
\end{equation}
provided $A$ is written as in (\ref{core EP decomposition}). 
Next, we present new characterizations for dual $k$-DMP matrices. 
\begin{theorem} \label{class dual dmp}   
Let  $A\in \Cnn$ be written as in (\ref{core EP decomposition}) with $\text{Ind}(A)=k$. The following conditions are equivalent:
\begin{enumerate}[(a)]
\item $A\in \C_{n}^{k,\dagger d}$;
\item $A^m A^{\dagger,d}=A^{\dagger,d} A^m$, for all $m\in \N$;
\item $A^m A^{\dagger,d}=A^{\dagger,d} A^m$, for some $m\in \N$;
\item $A^{\dagger,d}=A^d$;
\item $A^\dag A^{k+1} = A^k$.
\end{enumerate}
\end{theorem}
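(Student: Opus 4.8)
The plan is to follow the scheme of the proof of Theorem~\ref{class dmp}, of which the present statement is the exact dual. First I would settle the cheap implications. Since $A^{\dagger,d}$ is one of the eight generalized inverses listed in Theorem~\ref{k-commutative}, that theorem gives at once $(b)\Leftrightarrow(c)\Leftrightarrow(d)$. Moreover $(a)\Rightarrow(c)$ is trivial (take $m=k$), and $(d)\Rightarrow(a)$ holds because the Drazin inverse commutes with every power of $A$. Hence everything reduces to proving $(a)\Leftrightarrow(e)$.

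For $(a)\Leftrightarrow(e)$ I would use the representation $A^{\dagger,d}=Q_AA^d=A^\dag AA^d$ taken from Table~\ref{Generalized inverses}, together with the Drazin identities $A^dA=AA^d$ and $A^{k+1}A^d=A^dA^{k+1}=A^k$, and the elementary identity $A^kA^\dag A=A^{k-1}(AA^\dag A)=A^k$ (which uses only $AA^\dag A=A$ and $k\ge 1$). From these one gets, unconditionally, $A^kA^{\dagger,d}=A^k(A^\dag A)A^d=A^kA^d=A^dA^k$, while $A^{\dagger,d}A^k=A^\dag AA^dA^k=A^\dag AA^kA^d=A^\dag A^{k+1}A^d$. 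Consequently $(a)$, i.e. $A^kA^{\dagger,d}=A^{\dagger,d}A^k$, is equivalent to $A^dA^k=A^\dag A^{k+1}A^d$. For $(a)\Rightarrow(e)$ I would post-multiply this equality by $A$: the left side becomes $A^dA^{k+1}=A^k$ and, after simplifying with the Drazin identities, the right side becomes $A^\dag A^{k+2}A^d=A^\dag A^{k+1}$, so $A^\dag A^{k+1}=A^k$, which is $(e)$. For $(e)\Rightarrow(a)$ I would simply substitute $A^\dag A^{k+1}=A^k$ into $A^\dag A^{k+1}A^d$ to get $A^dA^k$, whence $A^kA^{\dagger,d}=A^dA^k=A^{\dagger,d}A^k$.

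The one point that needs care is to keep separate which one-sided projector identity is free and which one encodes the hypothesis: here $A^kA^\dag A=A^k$ (the projector $Q_A$ applied on the right of $A^k$) is automatic, whereas $A^\dag A^{k+1}=A^k$ (the same projector applied on the left of $A^k$) is exactly condition $(e)$ --- the mirror of the situation in Theorem~\ref{class dmp}, where $AA^\dag A^k=A^k$ is free and $A^{k+1}A^\dag=A^k$ is the nontrivial condition. For this reason I would also record the shortcut that, since $\ind(A^*)=k$, $(A^d)^*=(A^*)^d$, $(A^\dag)^*=(A^*)^\dag$ and $(A^{\dagger,d})^*=(A^*)^{d,\dagger}$, and since $MN=NM$ if and only if $N^*M^*=M^*N^*$, the five statements for $A$ are precisely the five statements of Theorem~\ref{class dmp} applied to $A^*$; nevertheless I would keep the direct computation above for the sake of a self-contained argument.
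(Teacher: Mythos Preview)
Your proof is correct and follows essentially the same approach as the paper, which simply says ``$(a)\Leftrightarrow(e)$ can be shown analogously to the proof of Theorem~\ref{class dmp}'' after invoking Theorem~\ref{k-commutative} for the remaining equivalences. Your explicit computation of the dual argument, together with the duality remark via $A\mapsto A^*$, actually spells out more than the paper does.
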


\begin{proof}
$(b) \Leftrightarrow (c) \Leftrightarrow (d)$  follow directly from Theorem \ref{k-commutative} with $X=A^{\dag,d}$. \\
$(a)\Rightarrow (c)$ and $(d)\Rightarrow (a)$ are clear.
\newline $(a)\Leftrightarrow(e)$ can be shown analogously to the proof of Theorem \ref{class dmp}.
\end{proof}

By \cite{FeLeTh3}, Theorem \ref{class dmp} and Theorem \ref{class dual dmp}, we can to obtain easily  the following  characterizations for $k$-EP matrices.

\begin{theorem} \label{class p-EP}  
Let  $A\in \Cnn$ be written as in (\ref{core EP decomposition}) with $\text{Ind}(A)=k$. The following conditions are equivalent:
\begin{enumerate}[(a)]
\item $A \in \C_n^{k,\dag}$;
\item $A^m X = X A^m$, for all $m\in \N$,  where $X\in \{A^{d,\dagger},A^{\dagger,d}\}$; 
\item $A^m X = X A^m$, for some $m\in \N$,  where $X\in \{A^{d,\dagger},A^{\dagger,d}\}$; 
\item $X=A^d$, where $X \in \{A^{d,\dagger},A^{\dagger,d}\}$;
\item $A^{k+1} A^\dag = A^\dag A^{k+1} =A^k$.
\end{enumerate}
\end{theorem}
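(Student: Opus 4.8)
The plan is to leverage the two preceding theorems together with the known characterization \eqref{k DMP matrices}--\eqref{dual k DMP matrices} of $k$-DMP and dual $k$-DMP matrices, rather than re-deriving everything from the core EP decomposition. The key observation is that $\C_n^{k,\dag}$ should coincide with the intersection $\C_{n}^{k,d\dag}\cap\C_{n}^{k,\dag d}$, so each of (a)--(e) in the statement splits into its ``$A^{d,\dag}$-part'' and its ``$A^{\dag,d}$-part'', which are handled by Theorem \ref{class dmp} and Theorem \ref{class dual dmp} respectively. Concretely, I would first establish the equivalence $A\in\C_n^{k,\dag}\Leftrightarrow A\in\C_{n}^{k,d\dag}\text{ and }A\in\C_{n}^{k,\dag d}$; one direction is immediate from the identities $A^{d,\dag}=A^dP_A$ and $A^{\dag,d}=Q_AA^d$, multiplying the relation $A^kA^\dag=A^\dag A^k$ on appropriate sides by $A^d$ and $P_A$ (resp. $Q_A$), exactly as in the $(a)\Rightarrow(e)$ argument of Theorem \ref{class dmp}. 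For the converse, combine \eqref{k DMP matrices} and \eqref{dual k DMP matrices}: if $A$ lies in both classes then $\Nu(N^*)\subseteq\Nu(\widetilde T_k)$ and $\Nu(N)\subseteq\Nu(S)$, and one checks from the core EP form that these two null-space conditions force $A^kA^\dag=A^\dag A^k=A^k$.

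Once that reduction is in place, the proof is essentially bookkeeping. For $(a)\Leftrightarrow(e)$: by Theorem \ref{class dmp}, $A\in\C_{n}^{k,d\dag}\Leftrightarrow A^{k+1}A^\dag=A^k$, and by Theorem \ref{class dual dmp}, $A\in\C_{n}^{k,\dag d}\Leftrightarrow A^\dag A^{k+1}=A^k$; intersecting gives precisely $A^{k+1}A^\dag=A^\dag A^{k+1}=A^k$, which is (e). For $(a)\Leftrightarrow(d)$: again by the two theorems, membership in $\C_{n}^{k,d\dag}$ is equivalent to $A^{d,\dag}=A^d$ and membership in $\C_{n}^{k,\dag d}$ to $A^{\dag,d}=A^d$; so $A\in\C_n^{k,\dag}$ iff both hold, i.e. iff $X=A^d$ for $X\in\{A^{d,\dag},A^{\dag,d}\}$, which is (d). The equivalences $(b)\Leftrightarrow(c)\Leftrightarrow(d)$ come directly from Theorem \ref{k-commutative} applied separately with $X=A^{d,\dag}$ and with $X=A^{\dag,d}$: each of those theorems already tells us that, for a fixed one of these two inverses, commuting with $A^m$ for some $m$, commuting for all $m$, and equalling $A^d$ are all the same condition; requiring it for both inverses in the set $\{A^{d,\dag},A^{\dag,d}\}$ is exactly how (b), (c) and (d) are phrased.

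I expect the main obstacle to be the clean verification of the reduction $\C_n^{k,\dag}=\C_{n}^{k,d\dag}\cap\C_{n}^{k,\dag d}$, specifically the ``$\Leftarrow$'' direction that translates the two null-space inclusions $\Nu(N^*)\subseteq\Nu(\widetilde T_k)$ and $\Nu(N)\subseteq\Nu(S)$ into $A^kA^\dag=A^\dag A^{k}=A^k$. A cleaner route that sidesteps the explicit core EP computation is purely inverse-theoretic: from $A^{k+1}A^\dag=A^k$ (Theorem \ref{class dmp}(e)) one gets $A^{d,\dag}=A^dP_A=A^d A^{k+1}A^\dag (A^d)^{k}\cdots$ — more simply, $A^{k+1}A^\dag=A^k$ is equivalent to $P_{A^k}=P_{A^k}P_A$ type relations — and one shows $A^{k+1}A^\dag=A^k$ together with $A^\dag A^{k+1}=A^k$ implies $A^kA^\dag=A^dA^{k+1}A^\dag=A^dA^k$ and symmetrically $A^\dag A^k=A^kA^d$, and since $A^dA^k=A^kA^d$ always holds these combine to give $A^kA^\dag=A^\dag A^k$. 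Verifying that this common value is $A^k$ (not merely that the two products agree) is the delicate point and is where I would slow down; everything else is a formal consequence of the three theorems already proved.
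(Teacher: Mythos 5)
Your route is essentially the paper's: the paper gives no written proof of Theorem \ref{class p-EP}, obtaining it exactly by combining Theorems \ref{class dmp} and \ref{class dual dmp} with the relation $\C_n^{k,\dag}=\C_{n}^{k,d \dagger}\cap\C_{n}^{k,\dagger d}$ from \cite{FeLeTh3}, and your bookkeeping of (b)--(e) via Theorem \ref{k-commutative} applied separately to $A^{d,\dagger}$ and $A^{\dagger,d}$ is precisely that argument, with the intersection identity proved rather than cited. One correction: your claim that the two null-space conditions force $A^kA^\dag=A^\dag A^k=A^k$ is false (the trailing equality already fails for $A=\mathrm{diag}(2,0)$ with $k=1$), but it is also unnecessary, since the cycle (a) $\Rightarrow A\in\C_{n}^{k,d \dagger}\cap\C_{n}^{k,\dagger d}\Rightarrow$ (e) $\Rightarrow$ (a) closes using only your ``cleaner route'' $A^kA^\dag=A^dA^{k+1}A^\dag=A^dA^k=A^kA^d=A^\dag A^{k+1}A^d=A^\dag A^k$; the ``delicate point'' you flag (that the common value be $A^k$) never has to be checked, because the equalities with $A^k$ in (e) come for free from Theorems \ref{class dmp}(e) and \ref{class dual dmp}(e).
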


In \cite{WaLi},  the following equivalences were proved:  
\begin{equation*} 
A\in \C_{n}^{k, \wg}  \Leftrightarrow (A^2)^{\weak}=(A^{\weak})^2  \Leftrightarrow A^{\weak}=A^d \Leftrightarrow  SN=0,
\end{equation*}
provided that $A$ is written as in (\ref{core EP decomposition}). 

Recently, in \cite{ZhChTh},   some more characterizations of WG matrices were obtained by using the core-EP decomposition.   More precisely, the authors proved the following equivalences:  
\begin{equation} \label{WG equivalences 2}
\begin{split}
A\in \C_{n}^{k, \wg} &  \Leftrightarrow A^m A^{\weak} = A^{\weak} A^m \,\,\text{for  arbitrary} \,\, m\in \N  \\ & \Leftrightarrow \tilde{T}_m N=0 \,\,\text{for arbitrary} \,\, m\in \N.
\end{split}
\end{equation}

Now, another characterizations of WG matrices are analysed.

\begin{theorem} \label{class weak}  Let  $A\in \Cnn$ be written as in (\ref{core EP decomposition}) with $\text{Ind}(A)=k$. The following conditions are equivalent:
\begin{enumerate}[(a)]
\item $A\in \C_{n}^{k, \wg} $;
\item $A^{d,\dag}=A^{\odagger}$;
\item $A^{\dag,\odagger}=A^{c,\dag}$;
\item $A^{\odagger} A^2 = A^{\odagger} A^2 A^{\odagger} A $;
\item $\Ra(N) \subseteq \Nu(\tilde{T}_k)$. 
\end{enumerate}
\end{theorem}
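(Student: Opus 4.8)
The plan is to translate each of (a)--(e) into an equivalent statement about the blocks $T,S,N$ of the core EP decomposition \eqref{core EP decomposition} and then to funnel all of them through the single identity $\tilde{T}_kP_N=0$. Throughout I would use the block representations collected in Table \ref{core EP representation}, the projector formula \eqref{Q_A}, the identities \eqref{T tilde property}--\eqref{T tilde property II}, and the already-known equivalence (from \cite{WaLi}) $A\in\C_{n}^{k,\wg}\Leftrightarrow A^{\weak}=A^d\Leftrightarrow SN=0$.

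First I would settle (a)$\Leftrightarrow$(d). From $A\stackrel{*}\approx\left[\begin{smallmatrix}T&S\\0&N\end{smallmatrix}\right]$ and $A^{\odagger}\stackrel{*}\approx\left[\begin{smallmatrix}T^{-1}&0\\0&0\end{smallmatrix}\right]$ a direct block multiplication gives $A^{\odagger}A^2\stackrel{*}\approx\left[\begin{smallmatrix}T&S+T^{-1}SN\\0&0\end{smallmatrix}\right]$ and $A^{\odagger}A^2A^{\odagger}A\stackrel{*}\approx\left[\begin{smallmatrix}T&S\\0&0\end{smallmatrix}\right]$, so (d) is equivalent to $T^{-1}SN=0$, i.e. to $SN=0$, i.e. to (a). Next, for (b) I would simply compare the representations of $A^{d,\dagger}$ and $A^{\odagger}$ in Table \ref{core EP representation}: they agree except in the $(1,2)$ block, so $A^{d,\dagger}=A^{\odagger}$ is equivalent to $T^{-(k+1)}\tilde{T}_kP_N=0$, hence (as $T$ is nonsingular) to $\tilde{T}_kP_N=0$; since $P_N$ is the orthogonal projector onto $\Ra(N)$, this says precisely $\Ra(N)\subseteq\Nu(\tilde{T}_k)$, which is (e). For (c) I would likewise compare $A^{\dag,\odagger}$ and $A^{c,\dagger}$: equality reduces to the two block conditions $T^{*}\Delta T^{-k}\tilde{T}_kP_N=0$ and $(I_{n-t}-Q_N)S^{*}\Delta T^{-k}\tilde{T}_kP_N=0$; invertibility of $T^{*},\Delta,T^{-k}$ turns the first into $\tilde{T}_kP_N=0$, after which the second holds automatically, giving (c)$\Leftrightarrow$(e).

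It remains to close the loop by proving (a)$\Leftrightarrow$(e), i.e. $SN=0\Leftrightarrow\tilde{T}_kP_N=0$ (equivalently $\tilde{T}_kN=0$, since $\Ra(N)=\Ra(P_N)$). If $SN=0$, then in $\tilde{T}_k=\sum_{i=0}^{k-1}T^{i}SN^{k-1-i}$ every term with $i\le k-2$ contains the factor $SN$ and vanishes, so $\tilde{T}_k=T^{k-1}S$ and hence $\tilde{T}_kN=T^{k-1}SN=0$. Conversely, assume $\tilde{T}_kN=\sum_{i=0}^{k-1}T^{i}SN^{k-i}=0$; the $i=0$ term is $SN^{k}=0$ by nilpotency of $N$, and cancelling the invertible factor $T$ from the remaining sum yields $\sum_{j=0}^{k-2}T^{j}SN^{k-1-j}=0$; splitting off the $j=k-1$ summand of $\tilde{T}_k$ then forces $\tilde{T}_k=T^{k-1}S$, and substituting this back into $\tilde{T}_kN=0$ gives $T^{k-1}SN=0$, so $SN=0$. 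Combined with the previous paragraph, all of (a)--(e) are equivalent.

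The computations are essentially mechanical once the representations of Table \ref{core EP representation} are available; the only step that needs a little care is this last equivalence $SN=0\Leftrightarrow\tilde{T}_kN=0$, where one must exploit nilpotency of $N$ to discard the top power and then recognise $T^{k-1}S$ as the surviving summand of $\tilde{T}_k$, and also keep straight that $\tilde{T}_kP_N=0$, $\tilde{T}_kN=0$ and $\Ra(N)\subseteq\Nu(\tilde{T}_k)$ all express the same condition.
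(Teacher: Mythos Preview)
Your proof is correct and follows essentially the same strategy as the paper: reduce each condition to a block identity via the core EP decomposition and show they all collapse to $\tilde{T}_kN=0$. The only differences are that for $(a)\Leftrightarrow(d)$ the paper cites \cite[Theorem~4.4]{WaLi} (that $A$ is WG iff $A^{\odagger}A^2$ commutes with $A^{\odagger}A$) and then invokes $A^{\odagger}AA^{\odagger}=A^{\odagger}$, whereas you do the block multiplication directly; and for $(a)\Leftrightarrow(e)$ the paper appeals to \eqref{WG equivalences 2} from \cite{ZhChTh}, whereas you prove $SN=0\Leftrightarrow\tilde{T}_kN=0$ by hand using nilpotency of $N$ and invertibility of $T$. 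Your version is thus a bit more self-contained, but the underlying route is the same.
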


\begin{proof}
$(a)\Leftrightarrow (d)$ In \cite[Theorem 4.4]{WaLi}, it was proved that $A$ is a WG matrix if and only if $A^{\odagger} A^2$  commute with $A^{\odagger} A$. Now, the result follows by using that $A^{\odagger}$ is an outer inverse of $A$. \\
$(b)\Leftrightarrow (c) \Leftrightarrow (e)$  By using Table \ref{core EP representation},  we have that $A^{d,\dag}=A^{\odagger}$ is equivalent to $T^{-(k+1)} \tilde{T}_k NN^\dag=0$ which in turn is equivalent to $\tilde{T}_k N=0$ because   $T$ is nonsingular and $N^\dag$ is an inner inverse of $N$. Clearly, $\tilde{T}_k N=0$ if and only if $\Ra(N) \subseteq \Nu(\tilde{T}_k)$.  According to Table \ref{core EP representation} we have that the equality $A^{\dag,\odagger}=A^{c,\dag}$ is equivalent to $\tilde{T}_k N=0$.\\
$(a)\Leftrightarrow (e)$ follows directly  from \eqref{WG equivalences 2} for $m=k$.
\end{proof}

The case for $k$-WC matrices is studied in the following result.
\begin{theorem} \label{class 4.5}
 Let  $A\in \Cnn$ be written as in (\ref{core EP decomposition}) with $\text{Ind}(A)=k$. The following conditions are equivalent:
\begin{enumerate}[(a)]
\item $A \in \C_{n}^{k,\weak \dag}$;
\item $A^m A^{\weak,\dagger}=A^{\weak,\dagger}A^m$, for all $m\in \N$;
\item $A^m A^{\weak,\dagger}=A^{\weak,\dagger}A^m$, for some $m\in \N$;
\item $A^{\weak,\dagger}=A^d$;
\item $A \in  \C_{n}^{\wc} \cap \C_{n}^{k,d \dagger}$;
\item $\Ra(N^2)\subseteq \Nu(S)$  and $\Nu(N^*) \subseteq \Nu(\tilde{T}_k)$;
\item $SN+TS(I_{n-t}-P_N)=0$.
\end{enumerate}
\end{theorem}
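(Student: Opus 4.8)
The plan is to settle the equivalences among $(a)$--$(d)$ almost for free, and then connect this block to the algebraic conditions $(e)$, $(f)$, $(g)$ through the core-EP representations of Table \ref{core EP representation}. First, $(b)\Leftrightarrow(c)\Leftrightarrow(d)$ is exactly Theorem \ref{k-commutative} read with $X=A^{\weak,\dag}$, which appears on the list there; $(a)\Rightarrow(c)$ is the special case $m=k$; and $(d)\Rightarrow(a)$ is immediate because $A^{\weak,\dag}=A^{d}$ commutes with every power of $A$. Thus $(a)$, $(b)$, $(c)$, $(d)$ are all equivalent, and it remains to link, say, $(d)$ with $(e)$, $(f)$, $(g)$.

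Next I would prove $(d)\Leftrightarrow(g)$. Writing $A^{\weak,\dag}$ and $A^{d}$ in the form given by Table \ref{core EP representation} and comparing blocks, the equality $A^{\weak,\dag}=A^{d}$ collapses to the single identity $\widetilde{T}_k=T^{k-1}SP_N$ (the remaining blocks coincide automatically). To convert this into $(g)$ I would right-multiply by $N$, use $P_NN=N$, and compare with the recursion $T\widetilde{T}_k=\widetilde{T}_kN+T^{k}S$ furnished by \eqref{T tilde property} and \eqref{T tilde property II}; this yields $TSP_N=SN+TS$, that is, $SN+TS(I_{n-t}-P_N)=0$. Conversely, $(g)$ forces $SN^2=0$ (right-multiply by $N$ and use $(I_{n-t}-P_N)N=0$), so by \eqref{T tilde} the sum defining $\widetilde{T}_k$ shrinks to $T^{k-1}S+T^{k-2}SN=T^{k-2}(TS+SN)=T^{k-2}(TSP_N)=T^{k-1}SP_N$, which is $(d)$. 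The index-one case ($N=0$, $P_N=0$) has to be handled apart throughout, but there it is trivial: each of $(d)$, $(f)$, $(g)$ just reads $S=0$.

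The equivalence $(f)\Leftrightarrow(g)$ is then elementary once $SN^2=0$ is available (it is the first half of $(f)$ and, as observed, a consequence of $(g)$): from $SN^2=0$ one gets $SNP_N=SN^2N^{\dagger}=0$, hence $SN(I_{n-t}-P_N)=SN$, and $\widetilde{T}_k$ reduces once more to $T^{k-2}(TS+SN)$; under these simplifications the relation $TS(I_{n-t}-P_N)=-SN$ of $(g)$ becomes equivalent to $\widetilde{T}_k(I_{n-t}-P_N)=0$, which is $\Nu(N^*)\subseteq\Nu(\widetilde{T}_k)$, the second half of $(f)$, while $SN^2=0$ is precisely $\Ra(N^2)\subseteq\Nu(S)$. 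Finally, $(e)\Leftrightarrow(f)$ follows by intersecting the characterization $A\in\C_{n}^{k,d \dagger}\Leftrightarrow\Nu(N^*)\subseteq\Nu(\widetilde{T}_k)$ from \eqref{k DMP matrices} with the characterization of $\C_{n}^{\wc}$ by the core-EP blocks (namely $\Ra(N^2)\subseteq\Nu(S)$) recalled from \cite{FeLePrTh}.

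The block multiplications are routine; the step requiring care is $(d)\Leftrightarrow(g)$, where one must combine both recursion identities for $\widetilde{T}_k$, exploit the asymmetry between $P_NN=N$ and $NP_N$, and use the collapse of the defining sum of $\widetilde{T}_k$ when $SN^2=0$, all while keeping the degenerate index-one case in mind. The only point where the argument leans on the earlier literature rather than on the representations at hand is in pinning down the characterization of $\C_{n}^{\wc}$ used in the last step.
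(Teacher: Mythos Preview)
Your argument is correct and tracks the paper's proof closely on the chain $(a)\Leftrightarrow(b)\Leftrightarrow(c)\Leftrightarrow(d)$, on $(e)\Leftrightarrow(f)$ (both of you quote the external characterizations of $\C_n^{\wc}$ and of $\C_n^{k,d\dagger}$), and on $(f)\Leftrightarrow(g)$ (the same reduction via $SN^2=0$ and $\tilde T_k(I_{n-t}-P_N)=T^{k-2}\bigl(TS(I_{n-t}-P_N)+SN\bigr)$). The one genuine difference is how the block $(a)$--$(d)$ is wired to the algebraic conditions: the paper closes the loop by quoting \cite[Theorem~6.1(b)]{FeLePrTh} for $(d)\Leftrightarrow(f)$, whereas you prove $(d)\Leftrightarrow(g)$ directly by comparing the $(1,2)$-blocks of $A^{\weak,\dag}$ and $A^d$ to obtain $\tilde T_k=T^{k-1}SP_N$ and then feeding this into the recursion $T\tilde T_k=\tilde T_kN+T^kS$. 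Your route is slightly more self-contained (it avoids one external citation) at the cost of a short extra block computation; the paper's route is shorter but leans on \cite{FeLePrTh} for the key link. Both are clean, and your handling of the degenerate case $k=1$ (where all of $(d)$, $(f)$, $(g)$ reduce to $S=0$) is correct.
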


\begin{proof}  $(b) \Leftrightarrow (c) \Leftrightarrow (d)$ It follows from  Theorem \ref{k-commutative} with $X=A^{\weak, \dag}$. \\
$(a)\Rightarrow (c)$ and $(d)\Rightarrow (a)$ are clear.\\ 
$(e)\Leftrightarrow (f)$ By \cite[Theorem 5.5]{FeLePrTh} and \cite[Theorem 3.13]{FeLeTh3}. \\
$(f)\Rightarrow (g)$ Clearly $\Ra(N^2)\subseteq \Nu(S)$  and $\Nu(N^*) \subseteq \Nu(\tilde{T}_k)$ are equivalent to  $SN^2=0$ and $\tilde{T}_k(I_{n-t}-P_N)=0$, respectively. So, from \eqref{T tilde} we obtain 
\[
0=\tilde{T}_k(I_{n-t}-P_N)=T^{k-2}(SN +TS(I_{n-t}-P_N)),\]
whence $SN+TS(I_{n-t}-P_N)=0$ because $T$ es nonsingular. \\
$(g)\Rightarrow (f)$  
Postmultiplying $SN+TS(I_{n-t}-P_N)=0$ by $N$ we immediately get $SN^2=0$, hence $\Ra(N^2)\subseteq \Nu(S)$. By using this fact, the equality $\tilde{T}_k(I_{n-t}-P_N)=0$ can be obtained as before. \\
$(d) \Leftrightarrow (f)$ By \cite[Theorem 6.1 (b)]{FeLePrTh}.
\end{proof}
\begin{remark}{\rm  Note that if $k\le 2$ then $\C_{n}^{k,d \dag}=\C_{n}^{k,\weak \dag}$. In fact,  if  $A\in \Cnn$ is written as in (\ref{core EP decomposition}), it is clear that $SN^2=0$ (or equivalently $\Ra(N^2)\subseteq \Nu(S)$) always is true. Now, the assertion holds by \eqref{k DMP matrices} and Theorem \ref{class 4.5} (f). Moreover, in this case, $A^{d,\dag}=A^{\weak,\dag}=A^d$ and $A^{c,\dag}=A^{\dag, d} $.
}
\end{remark}

Finally, next result provides characterizations for dual $k$-WC matrices.

\begin{theorem} \label{class 4.6}
 Let  $A\in \Cnn$ be written as in (\ref{core EP decomposition}) with $\text{Ind}(A)=k$. The following conditions are equivalent:
\begin{enumerate}[(a)]
\item $A \in \C_{n}^{k,\dag \weak}$;
\item $A^m A^{\dagger,\weak}=A^{\dagger, \weak}A^m$, for all $m\in \N$;
\item $A^m A^{\dagger, \weak}=A^{\dagger, \weak}A^m$, for some $m\in \N$;
\item $A^{\dagger, \weak}=A^d$;
\item $A \in  \C_{n}^{k, \wg} \cap \C_{n}^{k, \dagger d}$;
\item $\Ra(N)\subseteq \Nu(S)$ and $\Nu(N) \subseteq \Nu(S)$;
\item $SN +TS(I_{n-t}-Q_N)=0$.
\end{enumerate}
\end{theorem}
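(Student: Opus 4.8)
The plan is to mirror the proof of Theorem \ref{class 4.5}, now with the dual WC inverse. Since $A^{\dagger,\weak}$ is one of the generalized inverses listed in Theorem \ref{k-commutative}, the equivalences $(b)\Leftrightarrow(c)\Leftrightarrow(d)$ follow at once from that theorem applied with $X=A^{\dagger,\weak}$. The implication $(a)\Rightarrow(c)$ is immediate (take $m=k$), and $(d)\Rightarrow(a)$ holds because $A^{\dagger,\weak}=A^d$ commutes with every power of $A$; hence $(a)$--$(d)$ are already equivalent, and the remaining work is to connect this block with the ``block-level'' conditions $(e)$, $(f)$, $(g)$ expressed through the core EP decomposition \eqref{core EP decomposition}.

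Second, I would treat $(e)\Leftrightarrow(f)\Leftrightarrow(g)$ entirely in terms of $T$, $S$, $N$. For $(e)\Leftrightarrow(f)$: recall (as stated just before Theorem \ref{class weak}) that $A\in\C_{n}^{k,\wg}$ if and only if $SN=0$, i.e. $\Ra(N)\subseteq\Nu(S)$, while by \eqref{dual k DMP matrices} one has $A\in\C_{n}^{k,\dagger d}$ if and only if $\Nu(N)\subseteq\Nu(S)$; intersecting these gives $(f)$. For $(f)\Leftrightarrow(g)$: since $Q_N=N^\dagger N$ is the orthogonal projector onto $\Ra(N^*)$, the matrix $I_{n-t}-Q_N$ is the orthogonal projector onto $\Nu(N)$, so $\Nu(N)\subseteq\Nu(S)$ is equivalent to $S(I_{n-t}-Q_N)=0$, and moreover $N(I_{n-t}-Q_N)=0$. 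Thus $(f)$ trivially implies $(g)$; conversely, postmultiplying $SN+TS(I_{n-t}-Q_N)=0$ by $I_{n-t}-Q_N$ annihilates the first summand and, as $T$ is nonsingular, forces $S(I_{n-t}-Q_N)=0$, which fed back into $(g)$ yields $SN=0$, recovering $(f)$.

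Third, I would close the cycle by establishing $(d)\Leftrightarrow(f)$ (equivalently $(d)\Leftrightarrow(e)$). One quick half is $(e)\Rightarrow(d)$: if $A^{\weak}=A^d$ then $A^{\dagger,\weak}=Q_AA^{\weak}=Q_AA^d=A^{\dagger,d}$, and if in addition $A^{\dagger,d}=A^d$ then $A^{\dagger,\weak}=A^d$. For the reverse implication I would either invoke the dual counterpart of \cite[Theorem 6.1 (b)]{FeLePrTh} (used for Theorem \ref{class 4.5}), or argue directly by equating the core EP representations of $A^{\dagger,\weak}$ and $A^d$ from Table \ref{core EP representation}: comparing the $(2,1)$ blocks gives $(I_{n-t}-Q_N)S^*\Delta=0$, hence $S(I_{n-t}-Q_N)=0$ (using that $Q_N$ is Hermitian and $\Delta$ invertible), which makes $\Delta^{-1}=TT^*$, and the $(1,2)$ blocks then reduce $A^{\dagger,\weak}=A^d$ to $\tilde{T}_k=T^{k-1}S$, i.e. to $\tilde{T}_{k-1}N=0$ by \eqref{T tilde property}.

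I expect this last reduction to be the main obstacle in a self-contained treatment: one must still show that, under $S(I_{n-t}-Q_N)=0$, the condition $\tilde{T}_{k-1}N=0$ is equivalent to $SN=0$. I would settle this by a short downward induction on the powers of $N$ --- writing $\tilde{T}_{k-1}N=\sum_{j=0}^{k-2}T^{j}SN^{k-1-j}$, multiplying successively by $N^{k-2},N^{k-3},\dots,N^0$ and using $N^{k}=0$ to peel off $SN^{k-1}=0$, then $SN^{k-2}=0$, and so on down to $SN=0$ --- after which $(d)\Leftrightarrow(f)$ is complete and all seven conditions are seen to be equivalent.
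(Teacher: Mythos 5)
Your argument is correct, and its overall skeleton coincides with the paper's: $(b)\Leftrightarrow(c)\Leftrightarrow(d)$ from Theorem \ref{k-commutative} with $X=A^{\dag,\weak}$, the trivial implications $(a)\Rightarrow(c)$ and $(d)\Rightarrow(a)$, the equivalence $(e)\Leftrightarrow(f)$ from the cited characterizations of $\C_n^{k,\wg}$ ($SN=0$) and $\C_n^{k,\dag d}$ ($\Nu(N)\subseteq\Nu(S)$), and the direct manipulation for $(f)\Leftrightarrow(g)$ (you postmultiply by $I_{n-t}-Q_N$ where the paper postmultiplies by $Q_N$; both work). Where you genuinely diverge is the bridge between the commutation block and the block-level conditions: the paper proves $(b)\Leftrightarrow(f)$ by taking $m=1$, observing $AA^{\dag,\weak}=AA^{\weak}$, and equating the very simple block form of $AA^{\weak}$ with that of $A^{\dag,\weak}A$ from Table \ref{core EP representation}, which yields $(I_{n-t}-Q_N)S^*=0$ and $SN=0$ in one stroke with no induction. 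You instead prove $(e)\Rightarrow(d)$ algebraically via $A^{\dag,\weak}=Q_AA^{\weak}=Q_AA^d=A^{\dag,d}=A^d$ and then $(d)\Rightarrow(f)$ by equating the representations of $A^{\dag,\weak}$ and $A^d$, which leaves you with $S(I_{n-t}-Q_N)=0$ together with $\tilde{T}_{k-1}N=0$, and you must still extract $SN=0$; your nilpotency-peeling induction (multiply by $N^{k-2},N^{k-3},\dots$ and use $N^k=0$ with $T$ nonsingular) does close this correctly, so there is no gap, only extra work. Your alternative suggestion of invoking a ``dual counterpart'' of \cite[Theorem 6.1 (b)]{FeLePrTh} is unnecessary and not something the paper uses for the dual case, so it is better to keep your explicit computation. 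In short: the paper's $m=1$ commutation trick buys a shorter proof because $AA^{\weak}$ collapses nicely, while your route gives a direct, self-contained verification of $(d)\Leftrightarrow(f)$, the statement analogous to the one the paper imports from \cite{FeLePrTh} in the $k$-WC case (Theorem \ref{class 4.5}).
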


\begin{proof}  $(b) \Leftrightarrow (c) \Leftrightarrow (d)$ follows from  Theorem \ref{k-commutative}  with $X=A^{\dag,\weak}$. \\
$(a)\Rightarrow (c)$ and $(d)\Rightarrow (a)$ are clear.\\  
$(e)\Leftrightarrow (f)$  is a consequence of \eqref{WG equivalences 2} and   \cite[Theorem 3.16]{FeLeTh3}.\\
$(b)\Leftrightarrow (f)$  Firstly, by using Table \ref{Generalized inverses}  note that $A A^{\dag, \weak}=AA^{\weak}$ holds. Thus, from Table \ref{core EP representation} we have that $A A^{\dag, \weak} = A^{\dag, \weak} A$ is equivalent to 
\begin{equation*}
\left[\begin{array}{cc}
I_t & T^{-1}  S  \\
0  & 0
\end{array}\right]
 =\left[\begin{array}{cc}
T^* \Delta T  & T^* \Delta S+  T^* \Delta T^{-1} S N  \\
(I_{n-t}-Q_N)S^* \Delta T  & (I_{n-t}-Q_N)S^* \Delta S+(I_{n-t}-Q_N)S^* \Delta T^{-1} S N 
\end{array}\right],
\end{equation*}
which in turn is equivalent to $(I_{n-t}-Q_N)S^*=0$ and $SN=0$, because $T$ and $\Delta$ are nonsingular. Clearly, $(I_{n-t}-Q_N)S^*=0$ and $SN=0$ are equivalent to $\Nu(N) \subseteq \Nu(S)$ and $\Ra(N)\subseteq \Nu(S)$, respectively. Now,  from $(b)\Leftrightarrow (c)$ we obtain the desired equivalence. \\
$(f)\Rightarrow (g)$ As before, $(f)$ is equivalent to  $SN=0$ and $S(I_{n-t}-Q_N)=0$, respectively. So, 
$SN +TS(I_{n-t}-Q_N)=0$. \\
$(g)\Rightarrow (f)$ Assume  $SN +TS(I_{n-t}-Q_N)=0$. Post-multiplying by $Q_N$ we have $SN=0$, and consequently  $S(I_{n-t}-Q_N)=0$ because $T$ is nonsingular.  Thus, $(f)$ holds.
\end{proof}

\section{\textit{k}-index EP, \textit{k}-core EP, \{\textit{m,k}\}-core EP, and \textit{k}-MPCEP matrices}

In 2018, Ferreyra et al. \cite{FeLeTh3} showed that $A$ is $k$-core EP if and only if the core EP inverse and Drazin inverse coincide.  More precisely, they gave the following characterization by using the core-EP decomposition of $A$ given in \eqref{core EP decomposition}:
\begin{equation}\label{equality classes 1}
A\in\C_{n}^{k, \odagger}  \Leftrightarrow A^{\odagger}=A^d \Leftrightarrow \widetilde{T}_k=0.
\end{equation}

In \cite{WaLi}, the authors proved that if $A$ is written as \eqref{core EP decomposition}, then 
\begin{equation}\label{equality classes 2}
A\in \C_{n}^{k,iEP}  \Leftrightarrow  A^{\odagger}=A^d \Leftrightarrow S=0.
\end{equation}

Also recall that the set of $\{m,k\}$-core EP matrices given in Table \ref{Matrix classes} was introduced in order to extend the set of $k$-core EP matrices. In \cite{ZhChTh}, the authors derived the following characterization by using  core-EP decomposition:
\begin{equation}\label{equality classes 3}
A\in \C_{n}^{m,k, \odagger}  \Leftrightarrow \widetilde{T}_m=0,
\end{equation}
where $\widetilde{T}_m$ is as in \eqref{T tilde}.
Note that if $m=k$, the concepts of $\{m,k\}$-core EP matrix and $k$-core EP matrix coincide. 
Next, we will show that both notions are always equivalent even for $m\ne k$.  In particular, we derive that the classes given above  are equivalent to the class of  $k$-MPCEP matrices given in  Table \ref{matrix classes 2}. 

\begin{theorem}\label{class i-EP2}
 Let  $A\in \Cnn$ be written as in (\ref{core EP decomposition}) with $\text{Ind}(A)=k$. The following conditions are equivalent:
\begin{enumerate}[(a)]
\item $A \in \C_{n}^{k,\odagger}$;
\item $A\in \C_{n}^{k,iEP}$;
\item $A\in \C_{n}^{m,k, \odagger}$, where $m$ is an arbitrary positive integer;
\item $A \in \C_{n}^{k, \dag \odagger}$;
\item $A\in \C_n^{k,\weak \dag}\cap \C_n^{k, \dag \weak}$;
\item $A^m A^{\odagger}=A^{\odagger} A^m$, for all $m\in \N$;
\item $A^m A^{\odagger}=A^{\odagger}A^m$, for some $m\in \N$;
\item $A^{\odagger}=A^d$; 
\item $A^m A^{\dag, \odagger}=A^{\dag, \odagger} A^m$, for all $m\in \N$;
\item $A^m A^{\dag, \odagger}=A^{\dag, \odagger}A^m$, for some $m\in \N$;
\item $A^{\dag, \odagger}=A^d$; 
\item $S=0$;
\item $\tilde{T}_k=0$;
\item $\tilde{T}_m=0$, where $m$ is an arbitrary positive integer;
\item $A^{\odagger}=A^d=A^{d,\dagger}=A^{\dagger,d}= A^{c,\dagger}=A^{\weak}= A^{\dag, \odagger}=A^{\weak,\dag}=A^{\dag, \weak}$.
\end{enumerate}
\end{theorem}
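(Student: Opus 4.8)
The plan is to funnel every one of the fifteen conditions to the single block condition $S=0$ of item $(l)$, using the tools already available: \eqref{equality classes 1}, \eqref{equality classes 2}, \eqref{equality classes 3}, Theorem \ref{k-commutative}, the block formulas of Table \ref{core EP representation}, and the $S,N$-descriptions in Theorems \ref{class 4.5} and \ref{class 4.6}. Since $A^{\odagger}$ is an outer inverse of $A$ with $\Ra(A^{\odagger})=\Ra(A^k)$, Theorem \ref{k-commutative} applied to $X=A^{\odagger}$ gives $(f)\Leftrightarrow(g)\Leftrightarrow(h)$; applied to $X=A^{\dag,\odagger}$ it gives $(i)\Leftrightarrow(j)\Leftrightarrow(k)$, and the trivial implications $(i)\Rightarrow(d)\Rightarrow(j)$ fold $(d)$ into that block. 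Both chains \eqref{equality classes 1} and \eqref{equality classes 2} pass through the equality $A^{\odagger}=A^d$, so $(a)\Leftrightarrow(h)\Leftrightarrow(m)$ and $(b)\Leftrightarrow(h)\Leftrightarrow(l)$; in particular $(l)\Leftrightarrow(h)\Leftrightarrow(m)$.

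Next I would handle the $\tilde{T}_m$-type items. From \eqref{T tilde}, $S=0$ forces $\tilde{T}_m=0$ for every $m\in\N$, so $(l)\Rightarrow(n)$; and $(n)\Rightarrow(m)$ by taking $m=k$. Together with $(l)\Leftrightarrow(m)$ this yields $(l)\Leftrightarrow(m)\Leftrightarrow(n)$, and \eqref{equality classes 3} adds $(c)\Leftrightarrow(n)$. To tie the MPCEP block $(d)$--$(k)$ into this circle I would compare, via Table \ref{core EP representation}, the $(1,2)$-blocks of $A^{\dag,\odagger}$ and $A^d$: the equality $A^{\dag,\odagger}=A^d$ in $(k)$ forces $T^{-(k+1)}\tilde{T}_k=0$, i.e. $\tilde{T}_k=0$, so $(k)\Rightarrow(m)$; conversely, once $S=0$ one has $\Delta=(TT^*)^{-1}$, hence $T^*\Delta=T^{-1}$, $(I_{n-t}-Q_N)S^*\Delta=0$ and $\tilde{T}_k=0$, so Table \ref{core EP representation} gives $A^{\dag,\odagger}=A^d$, that is $(l)\Rightarrow(k)$.

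For the intersection item $(e)$, the implication $(l)\Rightarrow(e)$ is immediate: if $S=0$ then $SN^2=0$, $SN=0$, $S(I_{n-t}-Q_N)=0$ and $\tilde{T}_k=0$, so the characterizations in item $(f)$ of Theorems \ref{class 4.5} and \ref{class 4.6} place $A$ in $\C_n^{k,\weak\dag}\cap\C_n^{k,\dag\weak}$. The reverse implication is the delicate point, and I expect it to be the main obstacle. From $A\in\C_n^{k,\dag\weak}$ and Theorem \ref{class 4.6}(f) one gets $\Ra(N)\subseteq\Nu(S)$, i.e. $SN=0$, which collapses \eqref{T tilde} to $\tilde{T}_k=T^{k-1}S$; from $A\in\C_n^{k,\weak\dag}$ and Theorem \ref{class 4.5}(f) one gets $\Nu(N^*)\subseteq\Nu(\tilde{T}_k)=\Nu(S)$, using nonsingularity of $T$. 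Hence $S$ vanishes both on $\Ra(N)$ and on $\Nu(N^*)=\Ra(N)^\perp$, so it vanishes on all of $\C^{n-t}$; thus $S=0$, i.e. $(e)\Rightarrow(l)$.

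It remains to treat $(o)$. The implication $(o)\Rightarrow(h)$ is trivial, since $(o)$ contains $A^{\odagger}=A^d$. For $(l)\Rightarrow(o)$ I would substitute $S=0$ into each row of Table \ref{core EP representation}: as above $\Delta=(TT^*)^{-1}$ yields $T^*\Delta=T^{-1}$ and $(I_{n-t}-Q_N)S^*\Delta=0$, while every remaining off-diagonal block carries a factor $S$ or $\tilde{T}_k$ and hence vanishes, so each of $A^{\odagger}$, $A^d$, $A^{d,\dag}$, $A^{\dag,d}$, $A^{c,\dag}$, $A^{\weak}$, $A^{\dag,\odagger}$, $A^{\weak,\dag}$, $A^{\dag,\weak}$ reduces to $A^{\odagger}$, so all nine coincide. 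This closes all the equivalences. The only genuinely non-routine step is $(e)\Rightarrow(l)$; the rest is bookkeeping with the earlier results together with the block representations of Table \ref{core EP representation}.
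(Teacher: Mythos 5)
Your plan is essentially the paper's own: funnel everything to $S=0$ using \eqref{equality classes 1}--\eqref{equality classes 3}, Theorem \ref{k-commutative}, the block formulas of Table \ref{core EP representation}, and Theorems \ref{class 4.5} and \ref{class 4.6}. Your two re-routings are correct and harmless: for $(e)\Leftrightarrow(l)$ you combine the item-(f) characterizations with the observation that $S$ vanishes on $\Ra(N)$ and on $\Nu(N^*)=\Ra(N)^{\perp}$, hence on all of $\C^{n-t}$ (the paper instead plays the two item-(g) identities against each other), and you attach the MPCEP block to the main circle via the $(1,2)$-block comparison $(k)\Rightarrow(m)$ together with $(l)\Rightarrow(k)$, whereas the paper uses $(i)\Rightarrow(l)$ with $m=1$ (which forces $T^{*}\Delta S=0$); both arguments check out against Table \ref{core EP representation}.

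The one genuine gap is in your handling of items (c) and (n). These are intended for an arbitrary but \emph{fixed} $m$ (the paragraph preceding the theorem stresses that the $\{m,k\}$-core EP and $k$-core EP notions are equivalent ``even for $m\ne k$''). Your only exit from (n) is ``$(n)\Rightarrow(m)$ by taking $m=k$'', which tacitly reads (n) as a ``for all $m$'' statement; for a fixed $m\ne k$ you never show that $\tilde{T}_m=0$, equivalently $A\in\C_{n}^{m,k,\odagger}$, forces the remaining conditions, so under the intended reading the implications $(c)\Rightarrow(a)$ and $(n)\Rightarrow(m)$ are missing. The repair is one line and is exactly the paper's route: for fixed $m$, membership in $\C_{n}^{m,k,\odagger}$ means $A^{m}A^{\odagger}=A^{\odagger}A^{m}$, i.e.\ an instance of (g), which you have already proved equivalent to (h) via Theorem \ref{k-commutative}; then \eqref{equality classes 3} transfers this to (n). (Alternatively, argue directly: for $m\ge k$ use \eqref{T tilde property II}, and for $m<k$ note that $\tilde{T}_m=0$ gives $S=-\sum_{j=1}^{m-1}T^{-j}SN^{j}$, and iterating this identity kills $S$ by nilpotency of $N$.) With that addition your proof is complete and coincides in substance with the paper's.
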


\begin{proof}
$(a)\Leftrightarrow (b) \Leftrightarrow (h)\Leftrightarrow (l) \Leftrightarrow (m)$  follows from \eqref{equality classes 1} and \eqref{equality classes 2}. \\
 $(a)\Leftrightarrow (c) \Leftrightarrow (f)\Leftrightarrow (g) \Leftrightarrow (n)$ is a consequence from \eqref{equality classes 3} and Theorem \ref{k-commutative} with $X=A^{\odagger}$. \\ 
 $(i) \Leftrightarrow (j) \Leftrightarrow (k)$ 
 follows from Theorem \ref{k-commutative} with $X=A^{\dag, \odagger}$.
 \\ 
 $(d)\Rightarrow (j)$ and $(k)\Rightarrow (d)$ are clear.\\  
 $(e) \Leftrightarrow (l)$ is a consequence of items (a) and (g) in Theorem \ref{class 4.5} and items (a), (f) and (g) in Theorem \ref{class 4.6}.\\
$(i) \Rightarrow (l)$ Let $m=1$. From Table \ref{core EP representation},  it is easy to see  that  $AA^{\dag, \odagger}=A^{\dag, \odagger} A$  implies $T^* \Delta S=0$,  which is equivalent to $S=0$ because $T$ and $\Delta$ are nonsingular. \\
$(l)\Rightarrow (k)$ If $S=0$, from Table \ref{core EP representation}  we have $A^{\dag, \odagger}=A^d$. \\
$(k)\Rightarrow (o)$ Since (k) is equivalent to (l) we have $S=0$. Now, the assertion follows from \eqref{core EP decomposition} and Table \ref{core EP representation}.  \\
$(o)\Rightarrow (k)$ Trivial.
 \end{proof}
 
Note that if $A\in \Cnn$ is written as in (\ref{core EP decomposition}), it is easy to see that $A\in \C_n^{\ep}$ if and only if $S=0$ and $N=0$. Now, from \eqref{core EP decomposition}, Table \ref{core EP representation}, and Theorem \ref{class i-EP2} we have the following corollary.

\begin{corollary} \label{class i-EP}   Let  $A\in \Cnn$. If $ A\in \C_n^{\ep}$ then   \[A^\dag=A^{\odagger}=A^d=A^{d,\dagger}=A^{\dagger,d}= A^{c,\dagger}=A^{\weak}= A^{\dag, \odagger}=A^{\weak,\dag}=A^{\dag, \weak}.\]  Conversely, if $X=A^\dag$ for some $X\in \{ A^{\odagger}, A^d, A^{d,\dagger}, A^{\dagger,d}, A^{c,\dagger}, A^{\weak}, A^{\dag, \odagger}, A^{\weak,\dag}, A^{\dag, \weak}\}$ then $A\in \C_n^{\ep}$.
\end{corollary}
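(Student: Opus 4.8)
The plan is to reduce everything to the Core EP decomposition \eqref{core EP decomposition} and exploit the observation, made just before the statement, that $A\in\C_n^{\ep}$ if and only if $S=0$ and $N=0$. For the forward implication, I would write $A$ as in \eqref{core EP decomposition} and substitute $S=0$, $N=0$; then $A \stackrel{*}\approx \mathrm{diag}(T,0)$ with $T$ nonsingular, and every entry in Table \ref{core EP representation} collapses: $\Delta = (TT^*)^{-1}$, $\tilde T_m=0$ for all $m$, $P_N$ and $Q_N$ are vacuous (or zero), so each of $A^\dag$, $A^{\odagger}$, $A^d$, $A^{d,\dagger}$, $A^{\dagger,d}$, $A^{c,\dagger}$, $A^{\weak}$, $A^{\dag,\odagger}$, $A^{\weak,\dag}$, $A^{\dag,\weak}$ becomes $\mathrm{diag}(T^{-1},0)$. (Alternatively, and more cleanly, one invokes Theorem \ref{class i-EP2}: $S=0$ gives item (o), so all the recent inverses already agree with $A^d$; it then remains only to note $A^\dag \stackrel{*}\approx \mathrm{diag}(T^*\Delta,\ldots)$ reduces to $\mathrm{diag}(T^{-1},0)=A^d$ when moreover $N=0$, since the off-diagonal and $(2,2)$ blocks of $A^\dag$ all carry a factor $S$ or $N^\dag$.) This chain of equalities is exactly the displayed conclusion.

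For the converse, suppose $X=A^\dag$ for some $X$ in the listed set. The key point is that each inverse $X$ in the list satisfies $\Ra(X)\subseteq\Ra(A^k)$ (indeed $\Ra(X)=\Ra(A^k)$ for $A^{\odagger}$, $A^d$, $A^{d,\dagger}$, $A^{\weak}$, $A^{\weak,\dag}$, and $\Ra(X)=\Ra(A^*)$ for the "$Q_A\cdot$" inverses $A^{\dagger,d}$, $A^{c,\dagger}$, $A^{\dag,\odagger}$, $A^{\dag,\weak}$), while $\Ra(A^\dag)=\Ra(A^*)$ always. Comparing ranges and using that $A^\dag$ is in particular a $\{2\}$-inverse with $\Ra(A^\dag)=\Ra(A^*)$, one gets $\Ra(A^*)\subseteq\Ra(A^k)$, equivalently (since $\rk(A^*)=\rk(A)\ge\rk(A^k)$) that $\rk(A)=\rk(A^k)$, i.e. $\ind(A)\le 1$, so $N=0$ in \eqref{core EP decomposition}. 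Once $N=0$, the core EP inverse is $A^{\odagger}\stackrel{*}\approx\mathrm{diag}(T^{-1},0)$, which is independent of $S$; on the other hand $A^\dag\stackrel{*}\approx\mathrm{diag}(T^*\Delta,\ T^*\Delta S\cdot 0)$ has $(1,1)$ block $T^*\Delta = T^*(TT^*+SS^*)^{-1}$ after setting $Q_N=0$. I would then run the same argument with whichever $X$ was given — for each of the ten choices, Table \ref{core EP representation} with $N=0$ shows $X$ has $(1,1)$ block $T^{-1}$ (the "$A^d$-type" inverses) or $T^*\Delta$ (the "$Q_A$-type" inverses, which are all equal to $A^\dag$'s $(1,1)$ block already, so the hypothesis forces their remaining blocks to vanish). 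In either case the hypothesis $X=A^\dag$ forces $T^*\Delta = T^{-1}$, hence $\Delta^{-1}=TT^*$, hence $SS^*=0$, hence $S=0$. Together with $N=0$ this gives $A\in\C_n^{\ep}$.

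The main obstacle — or rather, the only genuine content beyond bookkeeping — is the converse, and specifically nailing down that $X=A^\dag$ forces $\ind(A)\le 1$. The cleanest route is the range comparison: $A^\dag$ has range $\Ra(A^*)$, every candidate $X$ has range contained in $\Ra(A^k)$, and $\Ra(A^*)\subseteq\Ra(A^k)$ together with the rank inequality $\rk(A^k)\le\rk(A)=\rk(A^*)$ pins down $\rk(A^k)=\rk(A)$. After that, forcing $S=0$ is a one-line computation in Table \ref{core EP representation} with $N=0$ (the $(1,1)$-block equation $T^*\Delta=T^{-1}$), and the case split over the ten inverses is entirely mechanical since with $N=0$ all their Core EP representations are listed explicitly. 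I would organize the write-up as: (i) prove $\Rightarrow$ via Theorem \ref{class i-EP2} plus the reduction of $A^\dag$; (ii) prove $\Leftarrow$ by first extracting $N=0$ from the range argument, then $S=0$ from the $(1,1)$-block, then conclude.
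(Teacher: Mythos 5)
Your forward implication is fine and is exactly the paper's route (reduce to the core EP decomposition, set $S=0$, $N=0$, and read off Table \ref{core EP representation}). In the converse, however, the range bookkeeping is already off: for the four inverses of the form $Q_AZ$, namely $A^{\dagger,d}$, $A^{c,\dagger}$, $A^{\dag,\odagger}$, $A^{\dag,\weak}$, one only has $\Ra(X)\subseteq\Ra(A^*)$ with $\rk(X)\le\rk(A^k)$; in general $\Ra(X)\neq\Ra(A^*)$ and $\Ra(X)\not\subseteq\Ra(A^k)$ (their lower-left block $(I_{n-t}-Q_N)S^*\Delta$ is nonzero), so the inclusion $\Ra(A^*)\subseteq\Ra(A^k)$ you want does not follow from $X=A^\dag$ for these four. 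That part is repairable by a rank count ($\rk(A)=\rk(A^\dag)=\rk(X)\le\rk(A^k)$ forces $N=0$), and for the five inverses with $\Ra(X)=\Ra(A^k)$ your argument even gives EP immediately.

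The genuine gap is the final step ``$X=A^\dag$ forces $T^*\Delta=T^{-1}$, hence $S=0$.'' For the $Q_AZ$-type inverses the $(1,1)$ block is $T^*\Delta$, which agrees with the $(1,1)$ block of $A^\dag$ automatically, so nothing is forced there; and your fallback claim that ``the hypothesis forces their remaining blocks to vanish'' is false for $A^{c,\dagger}$ and $A^{\dag,\odagger}$. Indeed, once $N=0$ (index at most one) these two coincide with $A^\dag$ for \emph{every} $S$: for a group matrix $A^{c,\dagger}=A^\dag AA^{\#}AA^\dag=A^\dag AA^\dag=A^\dag$ and $A^{\dag,\odagger}=A^\dag AA^{\core}=A^\dag AA^{\#}AA^\dag=A^\dag$. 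Concretely, $A=\begin{bmatrix}1&1\\0&0\end{bmatrix}$ has $\ind(A)=1$, is not EP, and satisfies $A^{c,\dagger}=A^{\dag,\odagger}=A^\dag=\frac12\begin{bmatrix}1&0\\1&0\end{bmatrix}$ (in the decomposition \eqref{core EP decomposition}: $T=1$, $S=1$, $N=0$). So no argument can close this step for those two choices of $X$ — the converse as printed fails for them, and the flaw in your write-up sits exactly at this point. Your scheme does go through for the remaining seven inverses; note that for $A^{\dagger,d}$ and $A^{\dag,\weak}$ it is the $(1,2)$ block (e.g.\ $T^*\Delta T^{-1}S=0$ after $N=0$), not the $(1,1)$ block, that yields $S=0$.
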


The following examples show that the class $\C_{n}^{k, iEP}$ is strictly included in $\C_n^{k,\weak \dag}$ and $\C_n^{k,\dag \weak}$, which in turn are included  in $\C_n^{k,d \dag}$ and $\C_n^{k,\dag d}$, respectively.

\begin{example} \label{example 1} 
Consider the matrix
\[
A= \left[\begin{array}{rrrr}
1 &  0 & -1 & 1\\
0 &  0 & -1 & 0  \\
0 &  0 &  0 & 1 \\
0 & 0 & 0 & 0
\end{array}\right].
\]
Clearly, $\ind(A)=3$. Denoting $T=1$, 
$S=\left[\begin{array}{rrr}
0 &  -1 & 1 
\end{array}\right]$  and $N= \left[\begin{array}{rrr}
0 &  -1 & 0  \\
0 &  0 & 1  \\
0 & 0 & 0\\
\end{array}\right]$, we have $SN =\left[\begin{array}{rrr}
0 & 0 & -1 
\end{array}\right]$ and 
$P_N=NN^\dag= \left[\begin{array}{rrr}
1 &  0 & 0  \\
0 &  1 & 0  \\
0 & 0 & 0\\
\end{array}\right]$. Therefore, 
\[SN+TS(I_{n-t}-P_N)=\left[\begin{array}{rrr}
0 &  0 & -1
\end{array}\right]+\left[\begin{array}{rrr}
0 &  0 & 1 
\end{array}\right]=\left[\begin{array}{rrr}
0 &  0 & 0 
\end{array}\right],\] and consequently  $A\in \C_n^{k,\weak \dag}$ by Theorem \ref{class 4.5}. However, since $S\neq 0$, according to  Theorem \ref{class i-EP2} we have $A\notin \C_{n}^{k, iEP}$.
\end{example} 

\begin{example} \label{example 2} 
Let 
\[
A= \left[\begin{array}{rrrr}
1 &  0 & 0 & 1\\
0 &  0 & 0 & 1  \\
0 &  0 &  0 & 0 \\
0 & 0 & 0 & 0
\end{array}\right].
\]
It is easy to see that $\ind(A)=2$. Since $T=1$, 
$S=\left[\begin{array}{rrr}
0 &  0 & 1 
\end{array}\right]$  and $N= \left[\begin{array}{rrr}
0 &  0 & 1  \\
0 &  0 & 0  \\
0 & 0 & 0\\
\end{array}\right]$, we have $SN =\left[\begin{array}{rrr}
0 & 0 & 0 
\end{array}\right]$ and 
$Q_N=N^\dag N= \left[\begin{array}{rrr}
0 &  0 & 0  \\
0 &  0 & 0  \\
0 & 0 & 1\\
\end{array}\right]$. Thus, 
\[SN+TS(I_{n-t}-Q_N)=\left[\begin{array}{rrr}
0 &  0 & 0
\end{array}\right]+\left[\begin{array}{rrr}
0 &  0 & 0
\end{array}\right]=\left[\begin{array}{rrr}
0 &  0 & 0 
\end{array}\right]\] and so  $A\in \C_n^{k,\dag \weak}$ by Theorem \ref{class 4.6}. However,  from Theorem \ref{class i-EP2} we get $A\notin \C_{n}^{k, iEP}$. 
\end{example} 
\begin{example} \label{example 3} The class $\C_n^{k,\weak \dag}$ is a proper subset of  $\C_n^{k, d \dagger}$. In fact, we consider the matrix  
\[
A= \left[\begin{array}{rrrr}
1 &  1 & 0 & 0\\
0 &  0 & -1 & 1  \\
0 &  0 &  0 & 1 \\
0 & 0 & 0 & 0
\end{array}\right].
\]
It is easy to see that $\ind(A)=3$. Moreover, we have 
\[
A^{d,\dag}=A^d=\left[\begin{array}{rrrr}
1 &  1 & -1 & 0\\
0 &  0 & 0 & 0  \\
0 &  0 &  0 & 0 \\
0 & 0 & 0 & 0
\end{array}\right] \quad \text{and} \quad 
A^{\weak, \dag}= \left[\begin{array}{rrrr}
1 &  1 & 0 & 0\\
0 &  0 & 0 & 0  \\
0 &  0 &  0 & 0 \\
0 & 0 & 0 & 0
\end{array}\right].
\]
Therefore, since $A^{d,\dag}=A^d$, from Theorem \ref{class dmp} we have $A\in \C_{n}^{k,d \dagger}$. However, as $A^{\weak,\dag}\neq A^d$, from Theorem \ref{class 4.5} it is clear that $A\notin\C_n^{k,\weak \dag}$.
\end{example} 

\begin{example} \label{example 4} The class $\C_n^{k,\dag \weak}$ is a proper subset of  $\C_n^{k, \dagger d}$. In fact, we consider the matrix  
\[
A= \left[\begin{array}{rrrr}
1 &  -1 & 1 & 0\\
0 &  0 & 0 & 0  \\
0 &  1 &  0 & 0 \\
0 & 0 & 1 & 0
\end{array}\right].
\]
It is easy to see that $\ind(A)=3$. Moreover, we have 
\[
A^{\dag, d}=A^d=\left[\begin{array}{rrrr}
1 &  0 & 1 & 0\\
0 &  0 & 0 & 0  \\
0 &  0 &  0 & 0 \\
0 & 0 & 0 & 0
\end{array}\right] \quad \text{and} \quad 
A^{\dag,\weak}= \left[\begin{array}{rrrr}
1 &  -1 & 1 & 0\\
0 &  0 & 0 & 0  \\
0 &  0 &  0 & 0 \\
0 & 0 & 0 & 0
\end{array}\right].
\]
Therefore, since $A^{\dag, d}=A^d$, from Theorem \ref{class dual dmp} we have $A\in \C_{n}^{k,\dagger d}$. However, as $A^{\dag, \weak}\neq A^d$, from Theorem \ref{class 4.6} it is clear that $A\notin\C_n^{k,\dag \weak}$.
\end{example} 

The other inclusions reflected in Figure 1 are also strict and examples of them can be found at  \cite{FeLeTh3, FeLePrTh, WaLi}.

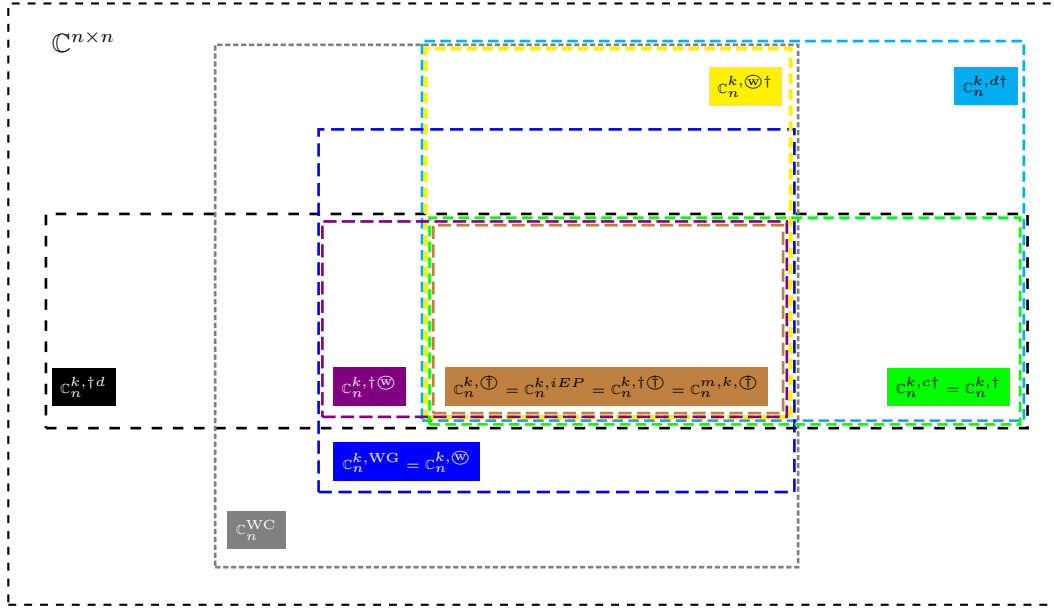
\begin{figure}[!ht]
\centering
\begin{tikzpicture}
\draw[black,thick,dashed] (0,-2) rectangle (14,6);
\node at (1,5.5) {$\Cnn$};
\draw[line width=1pt,dash pattern=on 4pt off 4pt, black] (0.5,0.35) rectangle (13.55,3.2);
\node[fill=black,text=white] at (1,0.9) {\tiny $\C_{n}^{k, \dagger d}$};
\draw[line width=1pt,dash pattern=on 4pt off 2pt,cyan] (5.5,0.45) rectangle (13.5,5.5);
\node[fill=cyan,text=black] at (13,4.9) {\tiny $\C_n^{k, d \dagger}$};
\draw[line width=1pt,dash pattern=on 2pt off 1pt,gray]  (2.75,-1.5) rectangle (10.5,5.45);
\node[fill=gray,text=white] at (3.3,-1) {\tiny $\C_{n}^{ \wc}$};
\draw[line width=1pt,dash pattern=on 5pt off 2.5pt,blue] (4.125,-0.5) rectangle (10.45,4.325);
\node[fill=blue,text=white] at (5.3,-0.1) {\tiny $\C_{n}^{k, \wg}=\C_{n}^{k, \weak}$};
\draw[line width=1.5pt,dash pattern=on 4pt off 2pt,yellow]  (5.55,0.5) rectangle (10.4,5.4);
\node[fill=yellow,text=black] at (9.8,4.9) {\tiny $\C_n^{k,\weak \dag}$};
\draw[line width=1pt,dash pattern=on 4pt off 2pt, green] (5.6,0.4) rectangle (13.45,3.15);
\node[fill=green,text=black] at (12.5,0.9) {\tiny $\C_{n}^{k, c \dagger}=\C_n^{k,\dag}$};
\draw[line width=1pt,dash pattern=on 5pt off 2pt,violet] (4.175,0.5) rectangle (10.35,3.1);
\draw[line width=1pt,dash pattern=on 5pt off 2pt,brown] (5.65,0.55) rectangle (10.3,3.05);
\node[fill=violet,text=white] at  (4.8,0.9) {\tiny $\C_n^{k,\dag \weak}$};
\node[fill=brown,text=black] at (7.95,0.9) {\tiny $\C_{n}^{k, \odagger}=\C_{n}^{k, iEP}=\C_{n}^{k, \dagger \odagger}=\C_{n}^{m,k, \odagger}$};
\end{tikzpicture}
\caption{Matrix classes relationships for $k>1$} 
\label{Figura1}
\end{figure} 

\section*{Conclusions}

It is well known that a square  complex matrix is called EP if it commutes with its Moore-Penrose inverse. 
In \cite{FeLeTh3}, by using generalized inverses in expressions of type $A^kX=XA^k$, where $k$ is the index of $A$,  generalizations of EP matrices was studied, 
which led to the emergence of new matrix classes, such as $k$-index EP, $k$-CMP matrices, $k$-DMP matrices, dual $k$-DMP matrices, and $k$-core EP matrices, investigated in detail by several authors.
This paper presents characterizations of these matrix classes in terms of only commutative equalities of type $AX=XA$, for $X$ being an outer generalized inverse, improving the known results in the literature. We extend widely the results by considering  expressions of the form $A^mX=XA^m$, where  $m$ is an arbitrary positive integer. The
Core-EP decomposition is efficient for investigating the relationships between the different matrix classes  induced by 
generalized inverses.  
Following the current trend in the research of generalized inverses and its applications, we believe that investigation related to matrix classes involving recent generalized  inverses will attract attention, and we describe perspectives for further research: 
1. Extending the matrix classes to Hilbert space operators. 
2. Considering matrix classes induced by weighted generalized inverse such that weighted core-EP inverse, weighted weak group inverse, weighted BT inverse, etc. 


\section*{Declarations} 
 
\noindent {\bf Ethical Approval} 

\noindent Not applicable.  
 
\noindent {\bf Competing interests}
 
\noindent Not applicable.
 
\noindent {\bf Authors' contributions} 

\noindent The whole research was developed by the four authors in close cooperation. Results were obtained by the four authors and improved by themselves in different iterations. N. Thome wrote the Introduction. D.E. Ferreyra and F. Levis wrote Sections 2 and 3. A. Priori and N. Thome wrote Sections 4 and 5. D. Ferreyra wrote Conclusions. F. Levis and A. Priori made Figure 1.

\noindent {\bf  Funding} 
 
\noindent This work was supported by the Universidad Nacional de R\'{\i}o Cuarto (UNRC)  under Grant PPI 18/C559; Universidad Nacional de La Pampa (UNLPam), Facultad de Ingenier\'{\i}a under Grant Resol. Nro. 135/19; Agencia Nacional de Promoci\'on Cient\'{\i}fica y Tecnol\'ogica (ANPCyT) under Grant  PICT 2018-03492; Consejo Nacional de Investigaciones Cient\'{\i}ficas y T\'ecnicas (CONICET) under Grant  PIP 112-202001-00694CO; and Ministerio de Econom\'{\i}a, Industria y Competitividad of Spain (MINECO) under Grant MTM2017-90682-REDT.

\noindent {\bf Availability of data and materials} 

\noindent Not applicable

\end{document}